\title{Character rigidity for lattices and commensurators}
\author{Darren Creutz and Jesse Peterson}
\date{}
\newtheorem{thm}{Theorem}[section]
\newtheorem{thma}{Theorem}
\newtheorem{cora}[thma]{Corollary}
\newtheorem{prop}[thm]{Proposition}
\newtheorem{cor}[thm]{Corollary}
\newtheorem{lem}[thm]{Lemma}
\newcommand{\actson}{{\curvearrowright}}
\newcommand{\rpf}[2]{#1\hspace{-.175em}\sslash\hspace{-.2em}#2}
\newcommand{\normal}{\hspace{0.05em}\triangleleft\hspace{0.05em}}
\begin{document}

\maketitle

\begin{abstract}
We prove an operator algebraic superrigidity statement for homomorphisms of irreducible lattices, and also their commensurators, from certain higher-rank groups into unitary groups of finite factors. This extends the authors' previous work regarding non-free measure-preserving actions, and also answers a question of Connes for such groups.
\end{abstract}

\section{Introduction}

A seminal result in the theory of semisimple groups and their lattices is Margulis' superrigidity theorem \cite{margulissuperrigidity}: Let $\Gamma$ be an irreducible lattice in a center free higher-rank semisimple group $G$ with no compact factors, let $H$ be a simple algebraic group over a local field and let $\pi: \Gamma \to H$ be a homomorphism whose image is Zariski dense in $H$, then either $\pi(\Gamma)$ is precompact or else $\pi$ extends to a continuous homomorphism $G \to H$.

Motivated by a conjecture of Selberg \cite{selberg}, Margulis developed the superrigidity theorem as the central ingredient in the proof of the Arithmeticity Theorem \cite{margulisarith}, which states that every irreducible lattice in a higher-rank semisimple Lie or algebraic group is, in a suitable sense, the  integer points of an algebraic group over a global field.  Since then, the phenomenon of superrigidity has found a wide array of applications, notably Zimmer's orbit equivalence rigidity stating that if two such lattices admit probability-preserving actions that are orbit equivalent then the ambient groups are locally isomorphic (a consequence of the cocycle superrigidity theorem \cite{zimmersuperrigidity} generalizing Margulis' work), and Furman's measure equivalence theorem \cite{furmanrigidity} stating that if a countable group is measure equivalent to a lattice in a higher-rank simple group then that group is in fact itself also a lattice.

There is a rich analogy between the interaction of a lattice in a group $\Gamma < G$, and the interaction between a countable group in its von Neumann algebra $\Gamma < \mathcal U(L\Gamma)$. In both situations the ``analytic'' properties of $G$ or $L\Gamma$ are often reflected in corresponding properties of $\Gamma$. For example, Connes exhibited the first rigidity phenomenon in II$_1$ factors, the existence of factors with countable fundamental group \cite{connescountable}, by exploiting Kazhdan's property (T) \cite{kazhdan} for $\Gamma$. This analogy was made more precise when Connes introduced his theory of correspondences \cite{connescorrespondences, popacorrespondences}, which provided the proper substitute for the representation theory of a finite von Neumann algebra. Using this theory one is able to define analytic properties of a finite von Neumann algebra such as amenability, property (T), the Haagerup property, etc. More importantly, by inducing or restricting one is able to relate representations of the group $\Gamma$ with correspondences for $L\Gamma$ and in this way show that such properties for the von Neumann algebra $L\Gamma$ are shared by $\Gamma$ (e.g., \cite{connesclassification, connessurvey, connesjonesT, choda}).

Based on the strong rigidity result of Mostow \cite{mostowstrongrigidity}, the superrigidity result of Margulis, and the cocycle version due to Zimmer, Connes suggested that the analogy could be pursued further and that there should be a similar superrigidity phenomenon for such groups embedded in their group von Neumann algebras. Connes further suggested that the first difficulty is to understand the role of the Poisson boundary in the setting of operator algebras. (See the discussion on page 86 in \cite{jonesproblems}).

The first examples of lattices in higher-rank groups where this ``operator algebraic superrigidity'' was verified were obtained by Bekka \cite{bekka} who showed that this holds for the groups $SL_n(\mathbb Z)$, for $n \geq 3$. Further examples were found in \cite{petersonthom} where the same results were obtained for the groups $SL_2(A)$ where $A = \mathcal O$ is a ring of integers (or, more generally, $A = \mathcal O S^{-1}$ a localization) with infinitely many units. Despite this initial progress, the proofs in \cite{bekka} and \cite{petersonthom} rely heavily on the structure of $SL_n$, e.g., by explicitly working with unipotent elements and using the congruence subgroup property, and as such do not appear to generalize to arbitrary irreducible lattices. The purpose of this paper is to provide the first examples of higher-rank groups $G$ such that operator algebraic superrigidity holds for arbitrary irreducible\footnote{We consider a lattice $\Gamma < G$ to be irreducible if for each non-compact closed normal subgroup $G_0 \lhd G$, we have that $\Gamma$ projects densely onto the quotient $G/G_0$. In the context of lattices in products of groups this is sometimes referred to as strong irreducibility.} lattices.

\begin{thma}[Operator Algebraic Superrigidity for lattices]\label{thm:opalgsuperrigidty}
Suppose $G$ is a semisimple connected Lie group with trivial center and no compact factors, such that at least one factor is higher-rank, and suppose $H$ is a non-compact totally disconnected semisimple algebraic group over a local field with trivial center and no compact factors. Let $\Gamma < G \times H$ be an irreducible lattice, and suppose $\pi: \Gamma \to \mathcal U(M)$ is a representation into the unitary group of a finite factor $M$ such that $\pi(\Gamma)'' = M$. Then either $M$ is finite dimensional, or else $\pi$ extends to an isomorphism $L\Gamma \xrightarrow{\sim} M$.
\end{thma}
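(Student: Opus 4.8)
The plan is to reformulate the statement as a classification of \emph{extremal characters} on $\Gamma$. Writing $\tau$ for the unique trace on the finite factor $M$ and setting $\varphi = \tau \circ \pi$, one obtains a character (a conjugation-invariant positive-definite function with $\varphi(e)=1$), and extremality of $\varphi$ corresponds to $M = \pi(\Gamma)''$ being a factor. The two alternatives in the theorem correspond exactly to $\varphi$ being the character of a finite-dimensional representation (the case $\dim M < \infty$) versus $\varphi = \delta_e$ (the regular character, giving $L\Gamma \xrightarrow{\sim} M$). So it suffices to show every extremal character is of one of these two types. I would work throughout with the trace-preserving \emph{conjugation action} $\Gamma \curvearrowright (M,\tau)$, $g\cdot x = \pi(g)\,x\,\pi(g)^*$, and try to produce either a large invariant structure (forcing finite-dimensionality) or enough ``freeness'' to force $\pi$ to be the left regular representation.

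The engine of the argument is boundary theory, as anticipated by Connes's remark on the Poisson boundary. Fix an admissible probability measure $\mu$ on $\Gamma$ and let $B$ be a $\Gamma$-boundary built from the Furstenberg--Poisson boundaries $G/P$ and $H/Q$ of the two factors; the $\Gamma$-action on $(B,\nu)$ is amenable and proximal. Since $\tau$ is preserved by every automorphism of $M$ it is in particular $\mu$-stationary, and I would use amenability of the boundary action together with stationarity to construct a $\Gamma$-equivariant measurable ``Poisson map'' from $B$ into a suitable compact convex $\Gamma$-space attached to $M$ --- e.g.\ the state space of $M$, or a space of subalgebras / conditional expectations --- carrying the conjugation action. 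Contractivity (proximality) of the boundary then severely constrains this map.

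The decisive input is the higher-rank structure of $G$. Here I would invoke a noncommutative analogue of the Nevo--Zimmer structure theorem (equivalently, a Bader--Shalom-type intermediate factor theorem): a stationary, equivariant object over a higher-rank boundary must be ``projective,'' i.e.\ must descend through a $G$-equivariant quotient of $B$ of the form $G/Q'$ for a parabolic $Q'$. Combined with contractivity, this forces the boundary map either to be essentially constant --- yielding a genuinely $\Gamma$-invariant state on $M$, which by factoriality and a relative-commutant analysis pins $\varphi$ down to a finite-dimensional character --- or to be ``full,'' in which case the associated correspondence is the coarse one and $\pi$ is the left regular representation, so $M \cong L\Gamma$. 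The totally disconnected factor $H$ and the irreducibility of $\Gamma$ enter through the commensurator picture: projecting to $G$ realizes $p_G(\Gamma)$ as a dense commensurated subgroup of a lattice $\Lambda = p_G(\Gamma \cap (G\times K))$, $K<H$ compact open, and this density lets one run the boundary argument using the ambient higher-rank group $G$ and then spread the resulting rigidity across all of $\Gamma$.

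The main obstacle, I expect, is exactly the noncommutative boundary step: proving that the $\Gamma$-equivariant boundary map into the states (or subalgebras) of $M$ is forced to be trivial unless $M = L\Gamma$. This requires a genuinely operator-algebraic version of the contractivity/projectivity dichotomy --- controlling the relative commutant $\pi(\Gamma)' \cap M$ and ruling out intermediate $\Gamma$-invariant von Neumann subalgebras --- rather than the measure-theoretic statements available classically. Managing the interaction between the connected higher-rank factor (which supplies the strong boundary theory and the structure theorem) and the totally disconnected factor (which supplies the commensurator density needed to transfer rigidity from $G$ to $\Gamma$) is where the technical weight of the proof will lie.
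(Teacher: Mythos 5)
Your overall frame---boundary theory plus the commensurator reduction $\Gamma_0 = \Gamma \cap (G \times K)$ projecting to a lattice in $G$ commensurated by the dense image of $\Gamma$---matches the paper's skeleton, and that reduction is exactly how the paper deduces Theorem~\ref{thm:opalgsuperrigidty} from its commensurator theorem (Theorem~\ref{thm:oasuperrigidity}). But the core of your argument has a genuine gap: the ``noncommutative Nevo--Zimmer / intermediate factor theorem'' that you invoke to get the projectivity-versus-constancy dichotomy is not proved, and you yourself concede it is ``the main obstacle.'' This is not a routine step one may black-box: no such structure theorem is available in or cited by the paper, and the paper's proof deliberately avoids needing one. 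Its substitute mechanism is quite different: it exploits the \emph{contractive} (SAT) property of the Poisson boundary to prove a rigidity theorem for normal $N$-bimodular maps on Izumi-type noncommutative boundaries (Theorem~\ref{thm:uniquemap}), uses the commensurating group to promote $\Gamma$-equivariance of the boundary algebra to $\Lambda$-equivariance (Proposition~\ref{prop:commutantcontainment}), and then runs a minimal-$\|\cdot\|_2$ averaging argument over conjugates, using density of $\Lambda$ in $G$ and Howe--Moore via Proposition~\ref{prop:freerestriction} and Lemma~\ref{lem:continuouscore}, to collapse the boundary to $1 \otimes N$ (Theorem~\ref{thm:smallcommutant}); the output is that $N = \pi(\Gamma)''$ is injective (Theorem~\ref{thm:amenablesubalgebra}). (A stationary-state/Nevo--Zimmer program along your lines was realized in later literature, but establishing that input is a major theorem in its own right, not something this proof can presuppose.)

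There is a second gap even if one grants your dichotomy: in the ``essentially constant'' branch, a $\Gamma$-invariant state (i.e., a hypertrace for the conjugation action) yields \emph{injectivity} (amenability) of the generated algebra, not finite-dimensionality, and no ``relative-commutant analysis'' by itself will bridge that---amenable infinite-dimensional finite factors abound. The paper closes this with the property (T) half of the Margulis strategy, which your sketch omits entirely: the higher-rank connected factor of $G$ has property (T), so by Cornulier's resolutions and Connes--Jones rigidity the set $\Gamma_U = \Gamma \cap (G \times U)$ has relative property (T) and $\pi(\Gamma_U)$ is precompact (Proposition~\ref{prop:oneT}, Proposition~\ref{prop:oneTfactor}), forcing the injective algebra $N$ to be completely atomic; a second application of Proposition~\ref{prop:oneT}, to the diagonal lattice embedding $\Lambda \to G \times (\rpf{\Lambda}{\Gamma})$ and using the Howe--Moore property of the relative profinite completion, is what finally makes $M$ itself finite dimensional. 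Without property (T) (or a substitute of comparable strength), the finite-dimensional alternative in the theorem is simply not reached by your argument.
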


An example of $G$ and $H$ where the hypotheses in the previous theorem are satisfied is $G = PSL_n(\mathbb R)$ and $H = PSL_n(\mathbb Q_p)$, for $n \geq 3$ and $p$ a prime. The theorem above actually holds in a greater generality, e.g., in many cases $G$ itself can also be totally disconnected (see Section~\ref{sec:mainresults} for the full generality). The above superrigidity result is a consequence of a corresponding superrigidity result for commensurators. To state this result we first recall that if $\Gamma < \Lambda$ is an inclusion of countable groups, then we say that $\Lambda$ commensurates $\Gamma$ if $[ \Gamma: \Gamma \cap \lambda \Gamma \lambda^{-1} ] < \infty$, for each $\lambda \in \Lambda$. To such an inclusion we may consider the homomorphism of $\Lambda$ into ${\rm Symm}(\Lambda/ \Gamma)$ given by left multiplication. The relative profinite completion of $\Lambda$ with respect to $\Gamma$ is the closure of the image of $\Lambda$ in ${\rm Symm}(\Lambda/ \Gamma)$ where the latter is endowed with the topology of pointwise convergence (see, e.g., \cite{schlichting, shalomwillis}). The relative profinite completion, denoted $\rpf{\Lambda}{\Gamma}$, is a totally disconnected group that is locally compact, with the closure of the image of $\Gamma$ in ${\rm Symm}(\Lambda/ \Gamma)$ being a compact open subgroup.

\begin{thma}[Operator Algebraic Superrigidity for commensurators]\label{thm:oascommensurator}
Let $G$ be as in Theorem~\ref{thm:opalgsuperrigidty} and suppose $\Lambda < G$ is a countable dense subgroup that contains and commensurates a lattice $\Gamma < G$ such that $\rpf{\Lambda}{\Gamma}$ is a product of simple groups with the Howe-Moore property.

If $\pi: \Lambda \to \mathcal U(M)$ is a finite factor representation such that $\pi(\Lambda)'' = M$, then either $M$ is finite dimensional, or else $\pi$ extends to an isomorphism $L \Lambda \xrightarrow{\sim} M$.
\end{thma}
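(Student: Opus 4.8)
The plan is to exploit the Burger--Shalom description of commensurators: the diagonal map $\lambda \mapsto (\lambda,\lambda)$ embeds $\Lambda$ as a lattice $\tilde\Lambda$ in $G \times K$, where $K = \rpf{\Lambda}{\Gamma}$ and $\Gamma$ lands in the compact open subgroup $\overline\Gamma < K$. This reduces Theorem~\ref{thm:oascommensurator} to a character-rigidity statement for the lattice $\tilde\Lambda < G\times K$, in which $G$ carries a higher-rank factor and $K = \prod_i S_i$ is a product of Howe--Moore groups, putting us essentially in the situation of Theorem~\ref{thm:opalgsuperrigidty} with $H = K$. I would first record the finite factor representation as a trace: $\varphi=\tau\circ\pi$ is an extreme, conjugation-invariant positive-definite function, and the goal is to show $\varphi$ is either the regular character $\delta_e$ (giving $\pi\colon L\Lambda\xrightarrow{\sim}M$) or comes from a finite-dimensional representation. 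Realizing $M$ on $L^2(M,\tau)$, the adjoint action $\mathrm{Ad}\,\pi$ is a trace-preserving $\Lambda$-action fixing $\mathbb C 1$, and I would analyze the orthogonal complement $L^2_0(M) = L^2(M,\tau)\ominus\mathbb C 1$ as a unitary $\Lambda$-representation.

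The engine of the proof is a boundary map. Let $B = B_G \times B_K$ be the product of the Furstenberg--Poisson boundaries of $G$ and of $K$; the $\tilde\Lambda$-action on $(B,\nu)$ is amenable and is a doubly ergodic boundary in the sense of Burger--Monod. Using amenability of this action together with $\pi$, I would construct a $\tilde\Lambda$-equivariant measurable map $\Phi\colon B \to \mathcal S$ into a suitable weak-$*$ compact convex $\tilde\Lambda$-space of states (or sub-tracial positive functionals) attached to $M$ with its conjugation action; equivalently, this realizes a $\Lambda$-equivariant unital completely positive disintegration of the character over the boundary. This is the noncommutative analogue of building a boundary map for a representation, and is where amenability of $G/P$ enters.

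The heart is then the rigidity of $\Phi$. On the $K$-side I would invoke the Howe--Moore property: since $\Gamma$ maps into the compact open subgroup $\overline\Gamma<K$ and each $S_i$ is Howe--Moore, any $\overline\Gamma$-invariant structure on $B_K$, or any almost-invariant vector surviving in the associated representation, is forced to be $K$-invariant, because matrix coefficients of representations without invariant vectors decay at infinity. On the $G$-side I would use higher-rank rigidity --- the Mautner phenomenon and the Nevo--Zimmer/projective-factor analysis of equivariant maps out of $G/P$ --- to control the $B_G$-dependence. Combining the two, $\Phi$ is essentially $G\times K$-invariant, hence essentially constant, so it produces a genuinely $\tilde\Lambda$-invariant state $\psi$ on $M$ for the conjugation action.

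Finally I would run the dichotomy on $\psi$ and on $L^2_0(M)$. Invariance of $\psi$ under conjugation, together with factoriality of $M$ and the mixing supplied by Howe--Moore on the $K$-factors, forces a clean alternative: either the adjoint representation of $\tilde\Lambda$ on $L^2_0(M)$ admits no nonzero almost-invariant vectors, in which case the only invariant trace is $\delta_e$ and $\pi$ extends to the isomorphism $L\Lambda\xrightarrow{\sim}M$; or such vectors exist, producing a nontrivial finite-dimensional conjugation-invariant subspace which, by factoriality and extremality of $\varphi$, forces $M$ itself to be finite-dimensional. In the latter case, character rigidity for the lattice $\Gamma<G$ applied to $\pi|_\Gamma$, together with the commensuration identity $[\Gamma:\Gamma\cap\lambda\Gamma\lambda^{-1}]<\infty$, yields $M=\pi(\Gamma)''$. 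The main obstacle I anticipate is the rigidity step in the noncommutative target: upgrading $\overline\Gamma$-invariance to full $K$-invariance via Howe--Moore while simultaneously handling the $G$-side, and doing so for a map valued in states on the von Neumann algebra $M$ rather than in probability measures, which requires the noncommutative boundary machinery rather than a purely measure-theoretic argument.
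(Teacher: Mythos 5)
Your global frame---the diagonal embedding of $\Lambda$ as a lattice in $G \times \rpf{\Lambda}{\Gamma}$, then a boundary/amenability argument contrasted with a rigidity argument---does match the architecture of the paper, but the two load-bearing steps in your proposal have genuine gaps. First, the ``heart'' (rigidity of your boundary map $\Phi$) is argued by appeal to the Mautner phenomenon and the Nevo--Zimmer projective factor theorem. Those are theorems about $G$-equivariant \emph{measure-theoretic} factors of $G/P$; they say nothing about $\Lambda$-equivariant maps into the state space of a von Neumann algebra $M$, and no analogue of them is available in that generality---this is precisely the hole that the paper has to fill with new technology. The paper's replacement uses a different dynamical property altogether: not double ergodicity, but contractiveness (the SAT property of Jaworski), which yields Lemma~\ref{lem:approxproj} and then Theorem~\ref{thm:uniquemap} (the only normal $N$-bimodular unital map on the noncommutative boundary $\mathcal B_N$ is the identity). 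That rigidity is then propagated from $\Gamma$ to the commensurator $\Lambda$ (Proposition~\ref{prop:commutantcontainment}) and combined with density of $\Lambda$ in $G$ and the Howe--Moore/properly outer results (Proposition~\ref{prop:freerestriction}, Lemma~\ref{lem:continuouscore}, Theorem~\ref{thm:smallcommutant}) to force $\mathcal B_N = 1 \otimes N$. Moreover, the output you extract from your boundary argument---a conjugation-invariant state $\psi$ on $M$---is vacuous: the trace $\tau$ is already $\Lambda$-invariant for the conjugation action, so $\psi$ carries no information and cannot drive any dichotomy. What the amenability half must deliver is \emph{injectivity} of $N = \pi(\Gamma)''$, and in the paper this is exactly what the identification $\mathcal B_N = 1 \otimes N$ gives (Theorem~\ref{thm:amenablesubalgebra}), since $\mathcal B_N$ is injective by Zimmer amenability of $\Gamma \actson (B,\eta)$.

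Second, the property (T) half of the Margulis strategy is missing from your proposal. Your dichotomy on $L^2_0(M)$ is unsupported in both directions: absence of almost-invariant vectors for ${\rm Ad}\,\pi$ does not imply $\tau\circ\pi = \delta_e$, and existence of almost-invariant vectors does not yield a finite-dimensional invariant subspace unless one has a (T)-type mechanism converting almost-invariant vectors into invariant ones. The paper supplies that mechanism explicitly: relative property (T) of $\Gamma_U = \Gamma \cap (G\times U)$ via Cornulier's resolutions makes $\pi(\Gamma_U)$ precompact, Proposition~\ref{prop:oneTfactor} then shows the $G$-algebra is finite index, and Proposition~\ref{prop:freerestriction} with Lemma~\ref{lem:continuouscore} forces it to be $\mathbb C$, whence $M$ is completely atomic (Proposition~\ref{prop:oneT}); Theorem~\ref{thm:oasuperrigidity} runs this twice, once for $\Gamma < G$ (using injectivity, hence the Haagerup property, of $N$) and once for the diagonal lattice in $G \times \rpf{\Lambda}{\Gamma}$ (using precompactness of $\pi(\Gamma)$). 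Finally, your last step invokes ``character rigidity for the lattice $\Gamma < G$'' as a known input to get $M = \pi(\Gamma)''$; but that statement is essentially Theorem~\ref{thm:opalgsuperrigidty}, which in this paper is \emph{deduced from} the commensurator theorem rather than available beforehand (the prior results of Bekka and Peterson--Thom cover only $SL_n$ over $\mathbb Z$ and rings of integers), so as written this step is circular. The identity $M = \pi(\Gamma)''$ instead falls out of the two applications of Proposition~\ref{prop:oneT} in the proof of Theorem~\ref{thm:oasuperrigidity}.
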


These results, describing the types of homomorphisms from a lattice or commensurator into the unitary group of a finite factor, should be contrasted with Popa's rigidity results \cite{popastrongrigidity1, popastrongrigidity2, popasuperrigidityproducts, popasuperrigidityT} where he shows, for example, that under certain ``malleability'' conditions for a measure-preserving action, a cocycle for this action into the unitary group of a finite von Neumann algebra is always cohomologous to a homomorphism. Thus, combining our result with Popa's superrigidity results (see also \cite{petersonsinclair}) one obtains a further rigidity statement about cocycles for such actions. For example, we obtain the following result.

\begin{thma}
Let $\Lambda$ and $G$ be as in Theorem~\ref{thm:oascommensurator}, and consider the Bernoulli shift action $\Lambda \actson [0, 1]^\Lambda$. If $M$ is a separable finite von Neumann algebra, and $\alpha: \Lambda \times [0, 1]^\Lambda \to \mathcal U(M)$ is a cocycle, then there exists a von Neumann subalgebra $N \subset M$, a central projection $p \in \mathcal Z(N)$, an isomorphism $\theta_1: L\Lambda \to Np$ (if $p \not= 0$), and a homomorphism $\theta_2: \Lambda \to \mathcal U(N p^\perp)$ such that $\theta_2(\Lambda)$ is precompact, and such that $\alpha$ is cohomologous to the homomorphism $\lambda \mapsto \theta_1(u_\lambda)p + \theta_2(\lambda)p^\perp$.
\end{thma}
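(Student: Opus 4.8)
The plan is to combine two rigidity inputs. The analytic one is Popa's cocycle superrigidity for $s$-malleable, weakly mixing actions (the Bernoulli shift being the prototype), which I will use to replace the cocycle $\alpha$ by a genuine homomorphism; the structural one is Theorem~\ref{thm:oascommensurator}, which classifies homomorphisms of $\Lambda$ into finite \emph{factors}. Since the homomorphism produced by Popa maps into $\mathcal U(M)$ but need not generate a factor, the link between the two is a direct integral decomposition over the centre together with a measurable, fibrewise application of Theorem~\ref{thm:oascommensurator}.

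Concretely, Popa's cocycle superrigidity theorem for malleable actions \cite{popasuperrigidityT} (see also \cite{petersonsinclair}) gives that $\alpha$ is cohomologous to a homomorphism $\pi\colon\Lambda\to\mathcal U(M)$. Setting $N:=\pi(\Lambda)''$, this is the subalgebra $N\subset M$ of the statement, a separable finite von Neumann algebra with $\pi(\Lambda)''=N$. Writing $\mathcal Z(N)\cong L^\infty(Z,\nu)$ I disintegrate $N=\int_Z^\oplus N_t\,d\nu(t)$ and $\pi=\int_Z^\oplus\pi_t\,d\nu(t)$, so that for $\nu$-a.e.\ $t$ the representation $\pi_t\colon\Lambda\to\mathcal U(N_t)$ is a finite factor representation with $\pi_t(\Lambda)''=N_t$. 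Theorem~\ref{thm:oascommensurator} then applies fibrewise: for a.e.\ $t$ either $N_t$ is finite dimensional (with $N_t=\pi_t(\Gamma)''$), or $\pi_t$ extends to an isomorphism $L\Lambda\xrightarrow{\sim}N_t$. As the type of $N_t$ is a $\nu$-measurable condition, this yields a measurable partition $Z=Z_0\sqcup Z_1$ into the finite-dimensional and type $\mathrm{II}_1$ loci; let $p\in\mathcal Z(N)$ be the central projection supported on $Z_1$ and $p^\perp$ the one on $Z_0$. Since $p\in\mathcal Z(N)$ and $\pi(\lambda)\in N$, the homomorphism of the statement will simply be $\lambda\mapsto\pi(\lambda)p+\pi(\lambda)p^\perp=\pi(\lambda)$, so it remains to recognise the two summands.

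On $Z_1$ each fibre isomorphism is trace preserving, so $\tau_{N_t}(\pi_t(\lambda))=\delta_{\lambda,e}$ for a.e.\ $t\in Z_1$, and integrating gives $\tau(\pi(\lambda)p)=\tau(p)\,\delta_{\lambda,e}$. Hence $u_\lambda\mapsto\pi(\lambda)p$ extends to a trace-preserving normal $*$-homomorphism $\theta_1\colon L\Lambda\to Np$; because $\Lambda$ is i.c.c.\ (its density in the centre-free group $G$ forces each nontrivial element to have an infinite conjugacy class) $L\Lambda$ is a factor, so $\theta_1$ is injective, and it is surjective by construction, hence an isomorphism whenever $p\neq 0$. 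On $Z_0$ each $\pi_t$ is finite dimensional, so for $\xi=(\xi_t)\in L^2(Np^\perp)$ the orbit $\{\pi_t(\lambda)\xi_t\}$ stays in a sphere of the finite-dimensional space $L^2(N_t)$; since $\int_{Z_0}\|\xi_t\|_2^2\,d\nu<\infty$ the associated diagonal field is Hilbert--Schmidt, so each orbit $\{\theta_2(\lambda)\xi\}$ is norm-precompact in $L^2(Np^\perp)$. Pointwise precompactness for a group of isometries gives precompactness in the strong topology, so $\theta_2:=\pi(\cdot)p^\perp\colon\Lambda\to\mathcal U(Np^\perp)$ has precompact image, as required.

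The step I expect to be the main obstacle is the very first one: verifying that cocycle superrigidity is actually available for these groups. Because the hypotheses require only one higher-rank factor, $\Lambda$ in general does \emph{not} have property (T) (an irreducible lattice projecting densely onto a factor without property (T) cannot itself have it), nor, by the normal subgroup theorem, an infinite proper normal subgroup with relative property (T); thus the classical (T) and $w$-rigid forms of Popa's theorem do not apply directly, and one must instead extract the requisite deformation rigidity (spectral gap, or a relative property (T) for the associated Gaussian construction) from the higher-rank factor of $G$ alone. Subsidiary technical points, which I expect to be routine, are the measurability of the fibrewise dichotomy of Theorem~\ref{thm:oascommensurator} and the direct-integral compactness argument establishing precompactness of $\theta_2(\Lambda)$.
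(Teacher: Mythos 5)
Your overall strategy---untwisting $\alpha$ to a homomorphism $\pi$ via cocycle superrigidity and then applying Theorem~\ref{thm:oascommensurator} fibrewise through a direct integral decomposition of $\pi(\Lambda)''$ over its centre---is exactly what the paper intends (the paper states this theorem as an immediate combination of Theorem~\ref{thm:oascommensurator} with the cited superrigidity results, without writing out the decomposition), and your construction of $\theta_1$ on the type $\mathrm{II}_1$ locus is fine. The genuine gap is in the precompactness of $\theta_2(\Lambda)$: fibrewise compactness does not imply precompactness of a direct integral. The set $\{(\eta_t)_t : \|\eta_t\|_2 = \|\xi_t\|_2 \text{ a.e.}\}$ is not norm-compact in $\int^\oplus L^2(N_t)\,d\nu(t)$ when $\nu$ is non-atomic; already for $N_t = \mathbb C$ on $Z_0 = [0,1]$ and $\xi = 1$ it is the set of unimodular functions in $L^2[0,1]$, which contains the family $e^{2\pi i n t}$ with mutual distances $\sqrt 2$. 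Concretely, the representation $\mathbb Z \ni n \mapsto (e^{2\pi i n t})_t \in \mathcal U(L^\infty[0,1])$ has every fibre one-dimensional yet no precompact orbit, so ``each $N_t$ is finite dimensional'' alone cannot yield the conclusion; indeed the statement would be false for a group with uncountably many finite-dimensional irreducible representations. The missing ingredient is that $\Lambda$ has only \emph{countably many} finite-dimensional irreducible representations (Theorem 10.3 in \cite{shalomcohom}, which is exactly what the paper invokes in the proof of Corollary~\ref{cor:commensuratorfree}). Granting this, the field $(\pi_t)_{t \in Z_0}$ of finite-dimensional irreducibles is, after a measurable choice of unitaries, constant on the pieces of a countable partition $Z_0 = \bigsqcup_i W_i$, so that $Np^\perp \cong \bigoplus_i L^\infty(W_i) \otimes M_{n_i}(\mathbb C)$ and $\theta_2(\lambda) = \bigoplus_i 1 \otimes \sigma_i(\lambda)$; the image then lies in a homeomorphic copy of the compact group $\prod_i \mathcal U(n_i)$ (a diagonal extraction plus dominated convergence in $\|\cdot\|_2$ shows this copy is compact in the strong topology), which gives precompactness.

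On your self-identified obstacle: the worry is legitimate ($\Lambda$ need not have property (T), and by the normal subgroup theorem for commensurators it cannot be w-rigid without having (T), since any infinite normal subgroup has finite index), but the resolution is not to extract spectral gap or relative property (T) from the higher-rank factor of $G$; it is already contained in the reference you cite. Peterson--Sinclair \cite{petersonsinclair} prove $\mathcal U_{\rm fin}$-cocycle superrigidity of Bernoulli actions under a criterion (non-amenability together with vanishing first $\ell^2$-Betti number, more generally $L^2$-rigidity) that applies to $\Lambda$ directly: $\beta_1^{(2)}(\Gamma) = 0$ because $\Gamma$ is an irreducible lattice in a semisimple group with a higher-rank factor, and vanishing passes from $\Gamma$ to $\Lambda$ because an infinite commensurated subgroup is s-normal \cite{petersonthom}. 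So the untwisting step is available as stated, but your proposal leaves it open, and the precompactness step as written fails.
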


As noted in \cite{bekka}, operator algebraic superrigidity can also be described in the framework of characters. A character on a discrete group $\Lambda$ is a conjugation invariant function $\tau$ that is of positive type, and is normalized so that $\tau(e) = 1$ (or equivalently, a tracial state on the full group $C^*$-algebra $C^*\Lambda$). If $M$ is a finite von Neumann algebra with trace $\tau$, and $\pi: \Lambda \to \mathcal U(M)$ is a representation then $\lambda \mapsto \tau(\pi(\lambda))$ gives a character. Moreover, the GNS-construction shows that every character arises in this way. It is also not hard to see that $M$ may be chosen to be completely atomic ($M$ is isomorphic to a direct sum of matrix algebras) if and only if $\tau$ is almost periodic in $\ell^\infty \Lambda$ ( the set of translations $\{ x \mapsto \tau(\lambda x) \mid \lambda \in \Lambda \}$ is precompact in $\ell^\infty\Lambda$). The space of characters forms a Choquet simplex, and the extreme points correspond to representations that generate a finite factor \cite{thoma1}.

Theorem~\ref{thm:oascommensurator} is therefore equivalent to the following rigidity result for characters, which reduces the classification to that of continuous characters on the Bohr compactification. There is a similar statement for Theorem~\ref{thm:opalgsuperrigidty}.

\begin{thma}[Character Rigidity for commensurators]
Let $\Lambda$ and $G$ be as in Theorem~\ref{thm:oascommensurator}. If $\tau: \Lambda \to \mathbb C$ is an extreme point in the space of characters then either $\tau$ is almost periodic, or else $\tau = \delta_e$.
\end{thma}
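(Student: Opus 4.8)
The plan is to derive this statement directly from Theorem~\ref{thm:oascommensurator} using the dictionary between characters and finite factor representations recalled in the introduction. First I would realize the extreme character $\tau$ as a representation: applying the GNS construction to $\tau$, viewed as a trace on $C^*\Lambda$, produces a finite von Neumann algebra $M = \pi(\Lambda)''$ equipped with a faithful normal tracial state $\tau_M$ and a representation $\pi \colon \Lambda \to \mathcal U(M)$ with $\tau(\lambda) = \tau_M(\pi(\lambda))$ for every $\lambda \in \Lambda$. Because $\tau$ is an extreme point of the character simplex, Thoma's theorem \cite{thoma1} ensures that $M$ is a factor, and being finite it is a finite factor; thus $\pi$ satisfies the hypotheses of Theorem~\ref{thm:oascommensurator}.

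I would then apply the dichotomy of Theorem~\ref{thm:oascommensurator}, whose two alternatives match the two conclusions to be established. If $M$ is finite dimensional, then it is completely atomic, so by the characterization recalled above---that a character is almost periodic exactly when its associated algebra may be taken completely atomic---the character $\tau$ is almost periodic. Otherwise $\pi$ extends to an isomorphism $L\Lambda \xrightarrow{\sim} M$. Since a finite factor carries a unique normalized trace, this isomorphism is automatically trace preserving, so $\tau(\lambda) = \tau_M(\pi(\lambda)) = \tau_{L\Lambda}(u_\lambda)$, where $\tau_{L\Lambda}$ denotes the canonical trace on the group von Neumann algebra. As $\tau_{L\Lambda}(u_\lambda) = \langle u_\lambda \delta_e, \delta_e \rangle = \delta_{\lambda, e}$, we obtain $\tau = \delta_e$, completing the argument.

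Essentially all of the difficulty is absorbed into Theorem~\ref{thm:oascommensurator}; here there is no substantive analytic obstacle, only the bookkeeping of the GNS correspondence. The two points deserving genuine care are the endpoints of the translation: first, that extremality of $\tau$ upgrades the GNS algebra from a finite von Neumann algebra to a factor, which is precisely Thoma's identification of extreme characters with factor representations; and second, that finite dimensionality of $M$ corresponds to almost periodicity of $\tau$, for which the direction used here is the easy one, since finite-dimensional algebras are completely atomic. Both are standard, so the proof amounts to a clean unwinding of Theorem~\ref{thm:oascommensurator}.
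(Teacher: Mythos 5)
Your proof is correct and follows exactly the route the paper intends: the paper states this character rigidity theorem as an immediate translation of Theorem~\ref{thm:oascommensurator} via the dictionary (extreme characters $\leftrightarrow$ finite factor representations by Thoma, almost periodicity $\leftrightarrow$ complete atomicity, and uniqueness of the normal trace on a finite factor handling the $L\Lambda$ case), which is precisely the unwinding you carry out. The two points you flag as needing care are indeed the only nontrivial steps, and you handle both correctly.
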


For finite or compact groups the study of characters has a long and successful history, dating back to the works of Frobenius, Schur, Peter-Weyl, and others. Classification of characters for non-compact groups goes back to Segal and von Neumann's result in \cite{segalvonneumann} (see also \cite{kadisonsingerreps}) where they show that connected simple Lie groups have no non-trivial continuous homomorphisms into a finite factor, and hence have no non-trivial continuous characters. For countably infinite groups the study of characters was initiated by Thoma \cite{thoma1, thoma2, thoma3} who gave a classification of extreme characters for the group of finite permutations of $\mathbb N$. Since then, classification results for characters on non-compact groups have been extended to a wide range of settings. The emphasis first being on more ``classical'' type groups, e.g., \cite{kirillov, ovcinikov, skudlarek, voiculescucharacters, vershikkerov1, vershikkerov2, boyer, boyer2, boyer3}, and then more recently to the less ``classical'' setting, e.g., \cite{dudkonessonov1, dudkonessonov2, dudkonessonov3, dudkofullgroup, dudkomedynetslimits, dudkomedynetsthompson, enomotoizumi}. The only previous classification results focusing on lattices were obtained in \cite{bekka} and \cite{petersonthom}. In the time since this paper was written the operator algebraic superrigidity statement in Theorem~\ref{thm:opalgsuperrigidty} has been shown by the second author to also hold in the case when $\Gamma$ is a lattice in a general higher-rank semi-simple Lie group, so long as the ambient group has property (T), e.g., if the group is simple. The case when the lattices is in a product of rank 1 groups with the Haagerup property, e.g., lattices in $SL_2(\mathbb R) \times SL_2(\mathbb R)$, remains open. 

Another consequence of operator algebraic superrigidity is that it implies a rigidity phenomenon for the stabilizers of measure-preserving actions. Specifically, given a measure-preserving action of the lattice $\Gamma$, one naturally obtains a homomorphism of the group into the von Neumann algebra associated to the orbit equivalence relation \cite{feldmanmooreII} (see also \cite{vershik}). Applying operator algebraic superrigidity to this setting we obtain the main result from \cite{creutzpeterson}.

\begin{cora}[\cite{creutzpeterson, creutz}]\label{cora:nonfree}
Let $\Lambda$ and $G$ be as in Theorem~\ref{thm:oascommensurator}. Then any probability measure-preserving ergodic action of $\Lambda$ on a standard Lebesgue space is essentially free.
\end{cora}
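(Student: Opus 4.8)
The plan is to encode the stabilizers of the action as a single character of $\Lambda$ and then feed it into Theorem~\ref{thm:oascommensurator}. Let $\Lambda \actson (X,\mu)$ be an ergodic probability measure-preserving action on a standard Lebesgue space, let $\mathcal R \subset X \times X$ be its orbit equivalence relation, and let $M = L\mathcal R$ be the associated von Neumann algebra with canonical trace $\tau$. The Feldman--Moore construction furnishes a representation $\pi : \Lambda \to \mathcal U(M)$ sending $\lambda$ to the unitary carried by the graph of $\lambda$, and a direct computation gives $\tau(\pi(\lambda)) = \mu(\{x \in X : \lambda x = x\})$. Hence $\tau_0(\lambda) := \mu(\{x : \lambda x = x\})$ is a character of $\Lambda$: conjugation invariance follows from measure-preservation, and positive-definiteness from the GNS picture just described. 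Since the action is essentially free exactly when $\mathrm{Fix}(\lambda)$ is null for every $\lambda \neq e$, it suffices to prove $\tau_0 = \delta_e$.

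Next I would pass to extreme characters. Writing $N = \pi(\Lambda)''$ with the trace inherited from $M$ and disintegrating over its center, $N = \int^{\oplus} N_\omega \, d\nu(\omega)$, produces a corresponding decomposition $\pi = \int^{\oplus} \pi_\omega$ into finite factor representations with $\pi_\omega(\Lambda)'' = N_\omega$, and hence an expression $\tau_0 = \int \tau_\omega \, d\nu(\omega)$ of $\tau_0$ as an average of the extreme characters $\tau_\omega$ (equivalently one may invoke the Choquet decomposition of $\tau_0$ directly). Applying the character-theoretic reformulation of Theorem~\ref{thm:oascommensurator} to each $\pi_\omega$, for $\nu$-almost every $\omega$ either $N_\omega$ is finite dimensional, in which case $\tau_\omega$ is almost periodic, or $\pi_\omega$ extends to an isomorphism $L\Lambda \xrightarrow{\sim} N_\omega$, in which case $\tau_\omega = \delta_e$.

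It then remains to show that the almost periodic part of the decomposition is trivial, and this I expect to be the crux. Each extreme almost periodic $\tau_\omega$ is the normalized character of a finite-dimensional unitary representation $\sigma : \Lambda \to U(n)$, and I would argue that every such $\sigma$ is trivial. By superrigidity for the lattice $\Gamma$ and its commensurator $\Lambda$, the continuous part of $\sigma$ is killed because $G$, being connected semisimple with trivial center and no compact factors, admits no nontrivial finite-dimensional unitary representation; the profinite part is killed because it factors through the relative profinite completion $\rpf{\Lambda}{\Gamma}$, whose factors are non-compact simple groups with the Howe--Moore property, and a continuous finite-dimensional unitary representation of such a factor has open kernel and hence, by topological simplicity, is trivial. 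Combined with the fact that the dense commensurator $\Lambda$ has no proper finite-index subgroups (so $\sigma$ cannot have nontrivial finite image either), this forces $\sigma$ to be trivial, whence $\tau_\omega = \mathbbm{1}$.

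Consequently the decomposition is supported on the two points $\mathbbm{1}$ and $\delta_e$, so $\tau_0 = a\,\mathbbm{1} + (1-a)\,\delta_e$, where $a$ is the measure of the set of global fixed points. This set is $\Lambda$-invariant, so ergodicity gives $a \in \{0,1\}$; assuming $(X,\mu)$ non-atomic (the atomic case being degenerate), $a = 1$ is impossible, so $a = 0$, $\tau_0 = \delta_e$, and the action is essentially free. The main obstacle is the third paragraph: transferring Margulis superrigidity for finite-dimensional representations from $\Gamma$ to the dense commensurator $\Lambda$ and exploiting the Howe--Moore structure of $\rpf{\Lambda}{\Gamma}$ to eliminate every almost periodic component of the stabilizer character.
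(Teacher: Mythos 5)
Your first two paragraphs are correct and follow the same route as the paper, which outsources exactly these steps to the results it cites in Corollary~\ref{cor:commensuratorfree}. The proof breaks in your third paragraph: the claim that every finite-dimensional unitary representation of $\Lambda$ is trivial is false for the groups covered by Theorem~\ref{thm:oascommensurator}. Take the motivating example $\Gamma = PSL_n(\mathbb Z) < \Lambda = PSL_n(\mathbb Z[1/p]) < G = PSL_n(\mathbb R)$, $n \geq 3$: reduction modulo a prime $q \neq p$ is a well-defined ring homomorphism $\mathbb Z[1/p] \to \mathbb Z/q\mathbb Z$ and induces a surjection of $\Lambda$ onto a nontrivial finite group, so $\Lambda$ has proper finite-index normal subgroups and nontrivial finite-dimensional unitary representations in abundance. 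Both of your supporting arguments fail concretely. First, a finite-dimensional representation of $\Lambda$ need not factor through $\rpf{\Lambda}{\Gamma}$: finite quotients of $\Lambda$ factor through its profinite completion, which is an entirely different object from the \emph{relative} profinite completion. Your Howe--Moore argument does show that continuous finite-dimensional unitary representations of $\rpf{\Lambda}{\Gamma}$ are trivial, but the correct conclusion is that the congruence representations do not factor through $\rpf{\Lambda}{\Gamma}$ --- not that they do not exist. Second, Margulis superrigidity can never eliminate a finite-dimensional \emph{unitary} representation: precompact image is precisely the alternative the superrigidity theorem permits, and it is exactly the case realized by congruence quotients.

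This gap is not patchable within your framework, because nontrivial almost periodic extreme characters of $\Lambda$ genuinely exist; indeed the action of $\Lambda$ on $\Lambda/\Lambda_0$, for $\Lambda_0$ a congruence subgroup, is ergodic, measure-preserving, and not essentially free, so the corollary itself can only hold after excluding atomic (finite) actions --- your claim would ``prove'' freeness of those actions too, which is absurd. A correct argument must show that a nonzero almost periodic component of the stabilizer character forces the action to have a finite orbit, so that non-atomicity (proper ergodicity) rules it out, rather than showing that such components cannot occur. That is what the paper's cited results accomplish, and it is where the second missing ingredient enters: one needs that $\Lambda$ has only countably many finite-dimensional irreducible representations (Theorem 10.3 of \cite{shalomcohom}), which together with Theorem~\ref{thm:oasuperrigidity} feeds into Theorem 3.2 of \cite{petersonthom} or Theorem 2.11 of \cite{dudkomedynetsthompson} to convert the almost periodic part of $\tau_0$ into finite-index stabilizers on a positive-measure set, hence by ergodicity into a finite orbit. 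Neither ingredient appears in your argument. (A lesser issue: even granting your third paragraph, identifying the coefficient $a$ in $\tau_0 = a\,\mathbbm{1} + (1-a)\,\delta_e$ with the measure of the set of global fixed points needs justification, since knowing $\mu({\rm Fix}(\lambda)) = a$ for every $\lambda \neq e$ does not determine the measure of $\bigcap_{\lambda \neq e} {\rm Fix}(\lambda)$.)
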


The previous corollary (which generalizes the normal subgroup theorem in \cite{creutzshalom}), or rather its proof from \cite{creutzpeterson}, was the starting point for this current work.

\section{Preliminaries and motivation}\label{sec:prelim}

In this section we will recall some of the notions from ergodic theory and von Neumann algebras that we will use in the sequel, and we will also outline our argument. For a more detailed review of the ergodic theory of semisimple groups and von Neumann algebras we refer the reader to \cite{zimmerbook} and \cite{dixmier}.

Let $N$ be a finite von Neumann algebra with a normal faithful trace $\tau$.  The trace on $N$ provides us with a positive definite inner product on $N$ given by $\langle x, y \rangle = \tau(y^*x)$, and we denote by $L^2(N, \tau)$ the Hilbert space completion of $N$ with respect to this inner product. We also have a norm on $N$ given by $\| x \|_1 = \tau(|x|)$, and we denote by $L^1(N, \tau)$ the Banach space completion of $N$ with this norm. Note that $\| x \|_1 \leq \| x \|_2 \leq \| x \|$ and so we consider inclusions $N \subset L^2(N, \tau) \subset L^1(N, \tau)$, moreover, each of these spaces is an $N$-bimodule, the trace extends continuously to $L^1(N, \tau)$, and we may identify $L^1(N, \tau)$ with the predual of $N$, by considering the linear functionals $N \ni x \mapsto \tau( x \xi)$, for $\xi \in L^1(N, \tau)$.

Left multiplication gives us a faithful representation of $N$ in the space of bounded operators on $L^2(N, \tau)$, and so we will always consider $N \subset \mathcal B(L^2(N, \tau))$. Since we are also considering $N \subset L^2(N, \tau)$, to avoid confusion we shall sometimes use the notation $\hat x \in L^2(N, \tau)$ to identify an operator $x \in N$ as an element in $L^2(N, \tau)$. 

Since $\tau$ is a trace it follows that conjugation $x \mapsto x^*$ on $N$ extends to an anti-linear isometry $J: L^2(N, \tau) \to L^2(N, \tau)$. Note that if $x, y \in N$ then we have $J x^* J \hat y = \widehat{yx}$ so that $J x^* J$ is multiplication on the right by $x$. Thus, $J N J \subset N'$ and in fact we have $J N J = N'$.

If $\Gamma$ is a countable group and we consider the left-regular representation $\lambda: \Gamma \to \mathcal U(\ell^2\Gamma)$, then the group von Neumann algebra $L\Gamma$ is the von Neumann algebra generated by this representation, i.e., $L\Gamma = \{ \lambda(\Gamma) \}''$ \cite{murrayvonNeumann4}. Since $\Gamma$ is discrete, the group von Neumann algebra will be a finite von Neumann algebra with a canonical normal faithful trace given by $\tau(x) = \langle x \delta_e, \delta_e \rangle$. As $\delta_e$ is a tracial and cyclic vector we may identify $\ell^2\Gamma$ with $L^2(L\Gamma, \tau)$ in such a way that the vector $\delta_\gamma$ corresponds to the vector $\widehat{\lambda(\gamma)}$, for each $\gamma \in \Gamma$. Under this identification the von Neumann algebra $L\Gamma$, as a subspace of $\ell^2\Gamma$, can then be described as the space of convolvers $\{ \xi \in \ell^2 \Gamma \mid \xi * \eta \in \ell^2 \Gamma, {\rm \ for \ all \ } \eta \in \ell^2 \Gamma \}$.

If we are given a quasi-invariant action on a probability space $\Gamma \actson (B, \eta)$ then we also have an action of $\Gamma$ on the space of measurable functions given by $\sigma_\gamma(a) = a \circ \gamma^{-1}$. The Koopman representation of this action on $L^2(B, \eta)$ is the unitary representation given by the formula $\sigma_\gamma^0(\xi) = \sigma_\gamma(\xi) \sqrt{ \frac{d \gamma \eta}{d \eta}}$. Note that by considering point-wise multiplication we may realize $L^\infty(B, \eta)$ as a von Neumann subalgebra of $\mathcal B(L^2(B, \eta))$, and under this identification the unitaries $\sigma_\gamma^0$ normalize $L^\infty(B, \eta)$. Specifically, we have the formula $\sigma_\gamma^0 a \sigma_{\gamma^{-1}}^0 = \sigma_\gamma(a)$, for all $a \in L^\infty(B, \eta) \subset \mathcal B(L^2(B, \eta))$, and $\gamma \in \Gamma$.  The group-measure space construction is the von Neumann algebra $L^\infty(B, \eta) \rtimes \Gamma \subset \mathcal B(L^2(X, \eta) \overline \otimes \ell^2\Gamma)$ generated by $L^\infty(B, \eta)$, together with the unitary operators $\{ \sigma_\gamma^0 \otimes \lambda(\gamma) \mid \gamma \in \Gamma \}$ \cite{murrayvonNeumann2,vonNeumann3}. Note that by Fell's absorption principle we have a conjugacy of unitary representations $\sigma^0 \otimes \lambda \sim 1 \otimes \lambda$, hence it follows that the von Neumann algebra $\{ \sigma_\gamma^0 \otimes \lambda(\gamma) \mid \gamma \in \Gamma \}''$ is canonically isomorphic to $L\Gamma$. 

If the action $\Gamma \actson (B, \eta)$ is measure-preserving, then the crossed product $L^\infty(B, \eta) \rtimes \Gamma$ will be finite and have a canonical normal faithful trace given by $\tau( x ) = \langle x ( \hat{1} \otimes \delta_e ), \hat{1} \otimes \delta_e \rangle$. 

A useful property for finite von Neumann algebras is that for an arbitrary von Neumann subalgebra there always exists a normal conditional expectation \cite{umegaki}. More specifically, if $M$ is a finite von Neumann algebra with normal faithful trace $\tau$, and if $N \subset M$ is a von Neumann subalgebra, then for each $x \in M$ the linear functional $y \mapsto \tau(xy)$ is normal, and hence there is a unique element $E(x) \in L^1(N, \tau)$ such that $\tau(x y) = \tau(E(x) y)$ for all $y \in N$. Since $E(x) \geq 0$ whenever $x \geq 0$ it follows that $E(x) \in N$ for all $x \in M$. This map $E: M \to N$ is a conditional expectation from $M$ to $N$, i.e., it is a unital completely positive projection onto $N$, moreover, it is the unique such map satisfying the condition $\tau \circ E = \tau$ (and this condition also implies that it is normal).

\subsection{On the method of proof} 

Before discussing the outline of our proof we recall Margulis' Normal Subgroup Theorem \cite{margulisamenable, margulisT}: If $\Gamma$ is an irreducible lattice in a center free higher-rank semisimple group $G$ with no compact factors, then $\Gamma$ is just infinite, i.e., every non-trivial normal subgroup has finite index.

Assuming $\Sigma \lhd \Gamma$ is non-trivial, Margulis' strategy for proving that $| \Gamma / \Sigma | < \infty$ consists of two ``halves''. The first, showing that $\Gamma / \Sigma$ has Kazhdan's property (T) \cite{kazhdan}, and the second showing that $\Gamma / \Sigma$ is amenable. Since for countable groups amenability and property (T) together imply finite, the result then follows. In the case when $G$ has property (T), then property (T) for $\Gamma/ \Sigma$ is immediate since property (T) passes to lattices and quotients. Significantly more difficult is the case when $G$ has no factor with property (T), in which case property (T) can be obtained by relating the reduced cohomology spaces of $\Gamma / \Sigma$ to those of $G$ as in \cite{shalomcohom}.

The amenability half of the proof follows by exploiting the amenability properties of the Poisson boundary $G \actson (B, \eta)$ \cite{furstenberg}, together with a ``factor theorem'' showing that any $\Gamma$-quotient of $(B, \eta)$ must actually be a $G$-quotient.\footnote{To avoid confusion with terminology from von Neumann algebras we refer to a $\Gamma$-equivariant map between $\Gamma$-spaces as a $\Gamma$-quotient.} More specifically, the action of $\Gamma$ on $(B, \eta)$ is amenable in the sense of Zimmer \cite{zimmeramenable}, and so there exists a (perhaps non-normal) $\Gamma$-equivariant conditional expectation $E: L^\infty(B, \eta) \overline \otimes \ell^\infty(\Gamma / \Sigma) \to L^\infty(B, \eta)$. If we consider 
$$
L^\infty(B, \eta)^\Sigma = \{ f \in L^\infty(B, \eta) \mid \sigma_{\gamma_0}(f) = f, {\rm \ for \ all \ } \gamma_0 \in \Sigma \},
$$ 
then this is a $\Gamma$-invariant von Neumann subalgebra of $L^\infty(B, \eta)$ and so by Margulis' factor theorem it follows that $L^\infty(B, \eta)^\Sigma$ is also $G$-invariant. However, $\Sigma$ acts trivially on the corresponding Koopman representation and since $G$ is center free, irreducibility for $\Gamma < G$ easily implies that any non-trivial representation of $G$ must be faithful for $\Gamma$. The conclusion is then that $L^\infty(B, \eta)^\Sigma = \mathbb C$, i.e., $\Sigma$ acts ergodically on $(B, \eta)$. However, we have $E( 1 \otimes \ell^\infty(\Gamma / \Sigma) ) \subset L^\infty(B, \eta)^\Sigma = \mathbb C$ and so we conclude that the restriction of $E$ to $1 \otimes \ell^\infty(\Gamma/ \Sigma)$ gives a $\Gamma$-invariant mean, showing that $\Gamma/ \Sigma$ is amenable. 

This general strategy of Margulis is remarkably flexible and has been employed to give rigidity results in a more abstract setting, e.g., \cite{badershalom, creutzshalom}, and also to give classification results beyond normal subgroups, e.g., \cite{stuckzimmer, creutzpeterson, creutz, hartmantamuz} where non-free measure-preserving actions are considered.  


Notions of property (T) and amenability are also of fundamental importance in the theory of II$_1$ factors (see, e.g., \cite{connesclassification, connesjonesT}), and hence it is natural to suspect that Margulis' strategy should have adaptations in this setting as well. This is especially the case given the emergence and success of Popa's deformation/rigidity theory where the major theme is to determine the structure of a II$_1$ factor by contrasting deformation properties (such as amenability) with rigidity properties (such as property (T)), e.g., see \cite{popabetti, popasurvey, vaessurvey, ioanasurvey}. 

Based on the ideas outlined above, if $\Gamma < \Lambda < G$ is as in Theorem~\ref{thm:oascommensurator}, then our strategy to prove operator algebraic superrigidity for $\Lambda$ is to first show that if $\pi: \Lambda \to \mathcal U(M)$ is a representation that generates a finite factor $M$, then either $\pi$ extends to an isomorphism $L\Lambda \to M$ or else the von Neumann subalgebra $N = \pi(\Gamma)''$ is amenable (i.e., injective) and has property (T). Amenability and property (T) for $N$ then imply that $N$ is completely atomic. If $N$ is completely atomic we then show there exists a continuous action of $\rpf{\Lambda}{\Gamma}$ on a finite index von Neumann subalgebra of $M$ that extends conjugation. However, $\rpf{\Lambda}{\Gamma}$ is a product of non-discrete groups with the Howe-Moore property and we rule out the possibility of such actions that are non-trivial.

Just as property (T) passes to quotients of a countable group, it also passes to finite von Neumann algebras that the group generates \cite{connesjonesT}. Thus, even in the finite factor setting if we assume that the ambient group $G$ has property (T), then so does $\pi(\Gamma)''$ and hence the property (T) half of Margulis' strategy follows immediately. In the case when $G$ has a non-compact quotient with property (T), then using the notion of resolutions from \cite{cornulierT}, in a similar fashion as in \cite{creutz}, we show that any representation of $\Gamma$ into an amenable finite von Neumann algebra must, upon passing to a finite index subalgebra, be given by a representation of $G$, which we again rule out using the Howe-Moore property. Thus, in this case also we are reduced to the amenability half of the argument.

The case when $G$ has no non-compact factor with property (T) appears more subtle. (See also the related question at the end of Section 2 in \cite{stuckzimmer}.) While a cohomological characterization of property (T) for finite factors was obtained in \cite{petersonT}, it is less clear if there is a characterization in terms of reduced cohomology. Consequently, it is unclear how to adapt reduced cohomology techniques, e.g., from \cite{shalomcohom}, in the setting of finite factors. This is the reason for our assumption in Theorem~\ref{thm:opalgsuperrigidty} that at least one factor has property (T).

In order to adapt the amenability half of Margulis' strategy to the II$_1$ factor framework, we follow the suggestion of Connes by first understanding the role of the Poisson boundary in this setting \cite{jonesproblems}. A significant first step in this direction was achieved by Izumi \cite{izumi2, izumi4} who introduced the notion of a noncommutative Poisson boundary of a normal unital completely positive map $\phi: \mathcal M \to \mathcal M$ where $\mathcal M$ is a von Neumann algebra. If $\pi: \Gamma \to \mathcal U(M)$ generates $M$, and $\Gamma \actson (B, \eta)$ is the Poisson boundary corresponding to a measure $\mu \in {\rm Prob}(\Gamma)$ then we may consider the ``non-commutative Poisson boundary'' 
\begin{equation}\label{eq:boundary}
\mathcal B_N = \{ \sigma_\gamma^0 \otimes (J \pi(\gamma) J) \mid \gamma \in \Gamma \}' \cap (L^\infty(B, \eta) \overline \otimes \mathcal B(L^2(N, \tau)) ), \tag{\large$\star$}
\end{equation}
which we identify with the von Neumann algebra of all $\Gamma$-equivariant essentially bounded measurable functions $B \to \mathcal{B}(L^{2}(N,\tau))$.

The von Neumann algebra $\mathcal B_N$ is indeed a non-commutative Poisson boundary in the sense of Izumi, although we will not need this fact here. An essential ingredient in our proof though is that by Theorem 5.1 in \cite{zimmerhyperfinite} we have that $\mathcal B_N$ is always an injective von Neumann algebra, and thus we may conclude that $N$ itself is injective if we can somehow conclude that $N = \mathcal B_N$. 

Finishing the analogy with Margulis' normal subgroup theorem we now discuss the analogue of the factor theorem for boundary actions. For this we strengthen the factor theorems for commensurators in \cite{creutzshalom} and \cite{creutzpeterson} to the setting of II$_1$ factors. The key feature of boundaries we use here is that they are contractive (or SAT), i.e., for each measurable set $F \subset B$ we have $\inf_{\gamma \in \Gamma} \eta(\gamma F) \in \{ 0, 1 \}$. This property was introduced by Jaworski \cite{jaworski1, jaworski2}, and we exploit this property to show in Theorem~\ref{thm:uniquemap} that the inclusion of von Neumann algebras $N \subset \mathcal B_N$ is extremely rigid. For example, it follows that the only normal unital $N$-bimodular map from $\mathcal B_N$ to itself is the identity map. This rigidity theorem is a natural extension of Theorem 4.34 in \cite{creutzpeterson} and allows us to show that the ``noncommutative $\Gamma$-quotient'' $\mathcal B_N$ is also invariant under the action of $\Lambda$ (Proposition~\ref{prop:commutantcontainment}). If in addition we have that $\pi$ does not extend to an isomorphism $L\Lambda \to M$, then we use an averaging argument together with the density of $\Lambda$ in $G$ to conclude that $\mathcal B_N$ must in fact be in the commutant of the action of $G$ (Theorem~\ref{thm:smallcommutant}). From this it follows easily that, in fact, $\mathcal B_N = N$ and hence it follows that $N$ is injective.

\section{Operator algebraic rigidity for contractive actions}\label{sec:opalgcontractive}

Throughout this section $\Gamma$ will be a countable group, $\Gamma_0 < \Gamma$ will be a finite index subgroup, $\Gamma \actson (B, \eta)$ will be a contractive action, $N$ will be a finite von Neumann algebra with normal faithful trace $\tau$, and $\pi: \Gamma \to \mathcal U(N)$ will be a homomorphism such that $N = \pi(\Gamma)''$. We let $\mathcal B_N$ be the corresponding von Neumann algebra as defined above by Formula $($\ref{eq:boundary}$)$.

Since $\Gamma \actson (B, \eta)$ is contractive, it is easy to show that for any $f \in L^\infty(B, \eta)$, with $f \geq 0$, there exists a sequence $\gamma_n \in \Gamma$ such that $\sigma_{\gamma_n}(f) \to \| f \|$ in the strong operator topology. Moreover, if $\tilde f \in L^\infty(B, \eta)$ is another function, then we can choose this sequence in such a way so that $\sigma_{\gamma_n}(\tilde f)$ converges strongly to a scalar. The following is an analogue of this fact for the noncommutative situation.

\begin{lem}\label{lem:approxproj}
Using the notation above, for $x \in \mathcal B_N$, $\| x \| \leq 1$, $f \in L^\infty(B, \eta)$, $f \geq 0$, and $\varepsilon > 0$, there exist sequences $\{ g_n \} \subset \Gamma_0$, $\{ p_n \} \subset \mathcal P(N)$, with $\tau(p_n) > 1 - \varepsilon$, for all $n$, and $\{ y_n \} \subset N$, such that $\{ y_n \}$ is uniformly bounded, $\sigma_{g_n}(f)$ converges strongly to  $\| f \|_\infty$, and $\pi(g_n) (p_n x  - y_n) \pi(g_n^{-1})$ converges strongly to $0$.
\end{lem}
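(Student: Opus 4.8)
The plan is to treat this as the noncommutative counterpart of the point-concentration property of contractive actions, and to transport that scalar statement to operators through the equivariant-function picture of $\mathcal B_N$. First I would record the commutative input in exactly the form needed inside $\Gamma_0$. Contractivity is inherited by the finite-index subgroup $\Gamma_0$ (decompose any norm-attaining sequence along cosets of $\Gamma_0$ and use that the property only depends on the measure class), so for the given $f\ge 0$ there is a sequence $g_n\in\Gamma_0$ with $\sigma_{g_n}(f)\to\|f\|_\infty$ strongly. Moreover, the ``point-concentration'' strengthening recalled just before the lemma lets me arrange simultaneously that $\sigma_{g_n}(h)$ converges strongly to a scalar for every $h$ in any prescribed countable family of bounded functions; I would fix such a family adapted to a countable set of matrix coefficients of $x$ and pass to the diagonal subsequence.

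Next I would view $x\in\mathcal B_N\subset L^\infty(B,\eta)\,\overline{\otimes}\,\mathcal B(L^2(N,\tau))$ as an essentially bounded equivariant field $b\mapsto x(b)$ satisfying $x(\gamma b)=\mathrm{Ad}(J\pi(\gamma)J)(x(b))$, and set $x_n:=\pi(g_n)\,x\,\pi(g_n^{-1})$, where $\pi(g_n)$ acts as $1\otimes\pi(g_n)$. Since $1\otimes\pi(g_n)$ commutes with every $\sigma_\gamma^0\otimes J\pi(\gamma)J$, each $x_n$ again lies in $\mathcal B_N$. The algebraic identity I would exploit is that the fibre of $x_n$ is $x_n(b)=\mathrm{Ad}(u_n)\big(x(g_n^{-1}b)\big)$, where $u_n:=\pi(g_n)J\pi(g_n)J$ is the standard unitary on $L^2(N,\tau)$ implementing the inner automorphism $\mathrm{Ad}(\pi(g_n))$ of $N$ and fixing $\hat 1$. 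Combined with the point-concentration of $g_n$, this shows that, after pairing with test vectors, the $B$-dependence of $x_n$ is washed out in the limit.

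I would then extract the approximant $y_n\in N$. Passing to a weak-$*$ limit of a subsequence of $\{x_n\}$, the previous step forces the limit to be constant in the $B$-variable; by the commutant description $(\ref{eq:boundary})$ a constant element $1\otimes W$ of $\mathcal B_N$ must satisfy $W\in(JNJ)'=N$, so the limit automatically lies in $1\otimes N$. This pins down the candidates $y_n\in N$ (namely $\mathrm{Ad}(\pi(g_n^{-1}))$ of this limit), uniformly bounded because $\|x\|\le 1$. The projections $p_n$ should arise from a Chebyshev/maximal cutoff: weak control of $x_n-1\otimes y_n$ is upgraded to genuine strong convergence only on a corner, and the mass estimate afforded by $\|x\|\le 1$ together with the concentration guarantees $\tau(p_n)>1-\varepsilon$.

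I expect the main obstacle to be precisely this last upgrade, and more sharply the uniformity of the scalar concentration against the \emph{moving} conjugations. The unitaries $u_n=\pi(g_n)J\pi(g_n)J$ transport the test vectors $\hat\xi,\hat\zeta$ to $u_n^*\hat\xi,u_n^*\hat\zeta$, so the scalar functions $b\mapsto\langle x(b)\,u_n^*\hat\xi,\,u_n^*\hat\zeta\rangle$ whose concentration I need form an $n$-dependent family rather than a fixed countable one, and a naive diagonalization is unavailable. Reconciling this — by exploiting the rigidity of the inclusion $N\subset\mathcal B_N$ together with the maximality argument producing the $p_n$ — is the technical heart of the lemma, whereas the reduction to the commutative point-concentration statement is comparatively routine.
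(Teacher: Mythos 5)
Your proposal stalls exactly where you say it does, and the obstruction you flag is not a minor technicality to be deferred: it is the entire content of the lemma, so what you have is an outline with the key idea missing rather than a proof. Moreover, the two devices you propose for producing $p_n$ and $y_n$ would not work as described. Passing to a weak-$*$ cluster point of $x_n = \pi(g_n) x \pi(g_n^{-1})$ and then compressing by projections cannot deliver the conclusion: weak convergence cut to a corner does not upgrade to strong convergence, and nothing in your outline yields the quantitative bound $\tau(p_n) > 1 - \varepsilon$. In the paper the Chebyshev projection plays a completely different role: the natural candidate for $y_n$ is the vector $y_0 = x(b_0)\hat{1} \in L^2(N, \tau)$, the fibre of $x$ at a reference point $b_0$ applied to $\hat{1}$, which corresponds to a possibly unbounded operator affiliated with $N$; the spectral projection $p = \mathbbm{1}_{[0, \varepsilon^{-1/2}]}(|y_0^*|)$ truncates it to a bounded element $y = p y_0 \in N$ with $\| y \| \leq \varepsilon^{-1/2}$, and Chebyshev's inequality (using $\| y_0 \|_2 \leq \| x \| \leq 1$) gives $\tau(p) > 1 - \varepsilon$. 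So the projection is there to secure the uniform boundedness of $\{ y_n \}$, not to convert weak limits into strong ones; indeed the paper's argument takes no operator limits at all.

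The idea your outline lacks is how the moving test vectors are killed, and it is worth internalizing. For each $n$ the paper makes a purely measure-theoretic (separability/Lusin-type) choice of a \emph{single} positive-measure set $E \subset B$ on which simultaneously $\| f \|_\infty - f < 1/n$ and the $L^2(N,\tau)$-valued map $b \mapsto x(b) \hat{1}$ oscillates by at most $1/n$. Contractivity of the $\Gamma_0$-action (which, as you note, passes to the finite index subgroup) is then used in its finite-intersection form: with $F_n \nearrow \Gamma$ finite, there is $g \in \Gamma_0$ with $\eta\left( \cap_{\gamma \in F_n} \gamma^{-1} g E \right) > 1 - 1/n$. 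Finally one tests $\pi(g)(px - y)\pi(g^{-1})$ only against the vectors $\pi(h)\hat{1}$ with $h \in F_n$: the equivariance relation $x(\gamma b) = (J\pi(\gamma)J) x(b) (J \pi(\gamma^{-1})J)$ defining $\mathcal B_N$, together with $JNJ = N'$, collapses the pairing at a point $b \in \cap_{\gamma \in F_n} \gamma^{-1} g E$ to $\| p ( x(g^{-1}hb) - x(b_0) ) \hat{1} \|_2 < 1/n$, since $g^{-1}hb \in E$. In other words, all the group translates are absorbed into the measure-theoretic condition on $\cap_{\gamma \in F_n} \gamma^{-1} g E$ rather than into an $n$-dependent family of matrix coefficients; only the one vector $\hat{1}$ and the values of $x(\cdot)\hat{1}$ on $E$ ever appear. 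Density of ${\rm span}\, \pi(\Gamma)\hat{1}$ in $L^2(N,\tau)$ and the uniform bounds $\| x \| \leq 1$, $\| y_n \| \leq \varepsilon^{-1/2}$ then give strong convergence of $\pi(g_n)(p_n x - y_n)\pi(g_n^{-1})$ to $0$, with no diagonalization over coefficient functions needed. Your fibre identity $x_n(b) = u_n x(g_n^{-1}b) u_n^*$ and your observation that constant elements of $\mathcal B_N$ lie in $1 \otimes N$ are both correct, but they do not substitute for this mechanism.
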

\begin{proof}
Let $F_n \subset \Gamma$ be an increasing sequence of finite sets such that $e \in F_1$, and $\cup_n F_n = \Gamma$. For each $n \in \mathbb N$ there exists a measurable subset $E \subset B$ with positive measure such that for $b \in E$ we have $\| f \|_\infty - f(b) < 1/n$, and such that for all $b_1, b_2 \in E$ we have $\| (x(b_1) - x(b_2)) \hat{1} \|_2 \leq 1/n$.

The action of $\Gamma_{0}$ on $(B,\eta)$ is contractive (Proposition 2.3 in \cite{creutzshalom}) and $\{ \gamma \eta : \gamma \in F_{n} \}$ is a finite collection of probability measures all in the same class as $\eta$ so by Lemma 2.5 in \cite{creutzshalom} there exists $g \in \Gamma_{0}$ such that $(\gamma \eta)(gE) > 1 - 1/(n |F_{n}|)$ for all $\gamma \in F_{n}$.  Therefore $\eta(\cap_{\gamma \in F_{n}} \gamma^{-1}gE) > 1 - 1/n$ (note that $g$ depends on $n$).


Fix a point $b_0 \in E$, and set $y_0 = x(b_0) \hat{1} \in L^2(N, \tau)$. Since $\| y_0^* \|_2 = \| y_0 \|_2 \leq \| x \| \leq 1$, if we set $p = \mathds{1}_{[0, \varepsilon^{-1/2}]}(|y_0^*|)$ then by Chebyshev's inequality we have $\tau(p) > 1 - \varepsilon$, and if $y = p y_0$, then $\| y \| \leq \varepsilon^{-1/2}$.

For $h \in F_n$, and $b \in \cap_{\gamma \in F_n} \gamma^{-1} g E$, we have $\| f \|_\infty - \sigma_g(f)(b) < 1/n$, and
\begin{align}
\| \pi(g) ( px(b) - y) \pi(g^{-1})  {\pi(h)} \hat 1 \|_2 
&= \|  (J \pi(g^{-1} h) J ) ( p x(b) - y ) ( J \pi( h^{-1} g ) J ) \hat 1 \|_2 \nonumber \\
&= \| (px(g^{-1} h b) - y ) \hat{1} \|_2 \nonumber \\
&= \| p ( x(g^{-1} h b) - x(b_0) ) \hat{1} \|_2 < 1/n \nonumber
\end{align}

Since the bounds $\| x \| \leq 1$, and $\| y \| \leq \varepsilon^{-1/2}$ are independent of $n$, and as the span of $\pi(\Gamma) \hat{1}$ is dense in $L^2(N, \tau)$, if we set $g_n = g$, $p_n = p$, and $y_n = y$, then as $n \to \infty$ we have that $\sigma_{g_n}(f)$ converges strongly to $\| f \|_\infty$, and ${\pi(g_n) (p_n x - y_n) \pi(g_n^{-1})}$ converges strongly to $0$. 
\end{proof}

The following rigidity theorem for contractive actions strengthens Theorem 4.34 from \cite{creutzpeterson}.

\begin{thm}\label{thm:uniquemap}
Using the notation above, suppose $P \subset \mathcal B_N$ is a von Neumann subalgebra that contains $N$, set
$$
\widetilde{\mathcal B_N} = \{ \sigma_\gamma^0 \otimes (J \pi(\gamma) J ) \mid \gamma \in \Gamma_0 \}' \cap (L^\infty(B, \eta) \overline \otimes \mathcal B(L^2(N, \tau))),
$$ 
so that $N \subset P \subset \mathcal B_N \subset \widetilde{\mathcal B_N}$. If $\Phi: P \to \widetilde{\mathcal B_N}$ is a bounded normal $N$-bimodular unital map, then $\Phi = {\rm id}$.
\end{thm}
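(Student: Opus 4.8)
The plan is to exploit $N$-bimodularity to reduce everything to the scalar function $h_x(b)=\langle (\Phi(x)-x)(b)\hat{1},\hat{1}\rangle$ on $B$, and then to annihilate $h_x$ using contractivity through Lemma~\ref{lem:approxproj}. First note that unitality together with $N$-bimodularity force $\Phi|_N=\mathrm{id}$, since $\Phi(a)=\Phi(a\cdot 1)=a\Phi(1)=a$ for $a\in N$. Writing $z_x:=\Phi(x)-x\in\widetilde{\mathcal B_N}$, it suffices to prove $z_x=0$ for every $x\in P$ with $\|x\|\le 1$. Moreover, replacing $x$ by $axb$ with $a,b\in N$ and using $z_{axb}=az_xb$, the pointwise identity $\langle z_x(b)\hat{1},\hat{1}\rangle=0$ (a.e.\ $b$, for all $x$) upgrades to $z_x=0$, because $\{b\hat{1}\}$ and $\{a^*\hat{1}\}$ are dense in $L^2(N,\tau)$. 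So the whole problem reduces to showing that the real and imaginary parts of $h_x$ vanish.

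Fix $x$ with $\|x\|\le 1$, put $C=\|z_x\|<\infty$, fix $\varepsilon>0$, and apply Lemma~\ref{lem:approxproj} with the concentrating function $f_0=(\mathrm{Re}\,h_x)^+\ge 0$: this produces $g_n\in\Gamma_0$, $p_n\in\mathcal P(N)$ with $\tau(p_n)>1-\varepsilon$, and a bounded sequence $y_n\in N$, such that $\sigma_{g_n}(f_0)\to\|f_0\|_\infty$ and $\pi(g_n)(p_nx-y_n)\pi(g_n^{-1})\to 0$ strongly. Since $\pi(g_n)p_n,\ \pi(g_n^{-1}),\ y_n\in N$ and $\Phi$ is $N$-bimodular with $\Phi|_N=\mathrm{id}$, applying $\Phi$ gives $\Phi(\pi(g_n)(p_nx-y_n)\pi(g_n^{-1}))=\pi(g_n)p_n\Phi(x)\pi(g_n^{-1})-\pi(g_n)y_n\pi(g_n^{-1})$. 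A bounded strongly null sequence is $\sigma$-weakly null, so by normality of $\Phi$ its image is $\sigma$-weakly null; subtracting the corresponding (strongly null) expression for $x$ itself yields $\pi(g_n)p_nz_x\pi(g_n^{-1})\to 0$ $\sigma$-weakly. Pairing this with the functional $w\mapsto\int_B\langle w(b)\hat{1},\hat{1}\rangle\,d\eta(b)$, and using the $\Gamma_0$-equivariance $z_x(b)=(J\pi(g_n)J)\,z_x(g_n^{-1}b)\,(J\pi(g_n^{-1})J)$ of elements of $\widetilde{\mathcal B_N}$ together with the identity $(J\pi(g_n^{-1})J)\,\widehat{\pi(g_n^{-1})}=\hat{1}$, the wandering conjugating unitaries collapse onto $\hat{1}$, giving $\int_B\langle p_nz_x(g_n^{-1}b)\hat{1},\hat{1}\rangle\,d\eta(b)\to 0$.

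Next I absorb the cutoff: since $\|(1-p_n)\hat{1}\|_2=\sqrt{\tau(1-p_n)}<\sqrt{\varepsilon}$ and $\|z_x(c)\hat{1}\|_2\le C$, replacing $p_n$ by $1$ costs at most $C\sqrt{\varepsilon}$, so with $\sigma_{g_n}(h_x)(b)=h_x(g_n^{-1}b)$ one gets $\limsup_n\bigl|\int_B\sigma_{g_n}(h_x)\,d\eta\bigr|\le C\sqrt{\varepsilon}$. Now the choice $f_0=(\mathrm{Re}\,h_x)^+$ pays off: contractivity gives $\sigma_{g_n}(f_0)\to\|f_0\|_\infty$ in $L^2(B,\eta)$, while $(\mathrm{Re}\,h_x)^+(\mathrm{Re}\,h_x)^-=0$ forces $\sigma_{g_n}((\mathrm{Re}\,h_x)^-)\to 0$ in measure (on the set of measure tending to $1$ where $\sigma_{g_n}(f_0)$ is bounded below), whence by dominated convergence $\int_B\sigma_{g_n}(\mathrm{Re}\,h_x)\,d\eta\to\|(\mathrm{Re}\,h_x)^+\|_\infty$. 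Comparing with the previous bound yields $\|(\mathrm{Re}\,h_x)^+\|_\infty\le C\sqrt{\varepsilon}$, and since $\varepsilon>0$ is arbitrary, $(\mathrm{Re}\,h_x)^+=0$. Running the argument with $f_0=(\mathrm{Re}\,h_x)^-$ and with the imaginary part of $h_x$ kills the remaining pieces, so $h_x=0$; the reduction of the first paragraph then finishes the proof.

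I expect the main obstacle to be exactly the middle step: converting the purely operator-algebraic conclusion $\pi(g_n)p_nz_x\pi(g_n^{-1})\to 0$ — obtained with only an \emph{approximate} cutoff $p_n$, and for a $\Phi$ that is not assumed positive, so that all limits must be tracked $\sigma$-weakly through normality — into genuine vanishing of $z_x$. This rests on two devices: the $\Gamma_0$-equivariance, which rewrites the non-convergent conjugation as a spatial shift landing on the $J$-fixed vacuum $\hat{1}$ so that the test functional is preserved, and the choice $f_0=(\mathrm{Re}\,h_x)^{\pm}$, which lets the SAT/contractivity concentration turn the $O(\sqrt{\varepsilon})$ defect directly into a bound on $\|h_x\|_\infty$. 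Keeping the $p_n$-error uniform in $n$ is what makes the normalization $\|x\|\le 1$ (hence $C<\infty$) essential.
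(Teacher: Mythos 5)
Your proof is correct, and its skeleton is the same as the paper's: reduce via $N$-bimodularity and density of $N\hat{1}$ to the vanishing of $h_x(b)=\langle(\Phi(x)-x)(b)\hat{1},\hat{1}\rangle$; feed $x$ into Lemma~\ref{lem:approxproj}; use normality plus $\Phi|_N={\rm id}$ to see that $\pi(g_n)p_n(\Phi(x)-x)\pi(g_n^{-1})\to 0$ $\sigma$-weakly; convert the conjugation into the translation $\sigma_{g_n}(h_x)$ by the $\Gamma_0$-equivariance of elements of $\widetilde{\mathcal B_N}$ and the identity $J\pi(g_n)J\hat{1}=\pi(g_n^{-1})\hat{1}$; and absorb the cutoff at cost $\|\Phi(x)-x\|\sqrt{\varepsilon}$. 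The one genuine difference is your choice of concentrating function. The paper takes $f=|h_x|$ and asserts $\|h_x\|_\infty=\lim_n\bigl|\int\sigma_{g_n}(h_x)\,d\eta\bigr|$; as literally stated this needs more than $\sigma_{g_n}(|h_x|)\to\|h_x\|_\infty$, since the phase of $h_x$ could oscillate and cause cancellation in the integral. The authors cover this by the remark preceding Lemma~\ref{lem:approxproj} (the sequence can be chosen so that a second prescribed function also converges strongly to a scalar, here $h_x$ itself, of modulus $\|h_x\|_\infty$), but that refinement is not part of the lemma's formal statement. Your decomposition into $({\rm Re}\,h_x)^{\pm}$ and $({\rm Im}\,h_x)^{\pm}$, with the disjoint-support observation forcing $\sigma_{g_n}(({\rm Re}\,h_x)^{\mp})\to 0$ in measure and dominated convergence, extracts the needed limit $\int\sigma_{g_n}({\rm Re}\,h_x)\,d\eta\to\|({\rm Re}\,h_x)^{+}\|_\infty$ using only the lemma as stated, at the cost of four applications instead of one --- a slightly more self-contained packaging of the same argument.
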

\begin{proof}
Fix $x \in P$, and $\varepsilon > 0$. By Lemma~\ref{lem:approxproj} there exist sequences $g_n \in \Gamma_0$, $p_n \in \mathcal P(N)$ with $\tau(p_n) > 1 - \varepsilon$, and $y_n \in N$ such that $\sigma_{g_n}(| \langle (\Phi(x) - x) \hat{1}, \hat{1} \rangle  |) \in L^\infty(B, \eta)$ converges strongly to $\| \langle (\Phi(x) - x) \hat{1}, \hat{1} \rangle \|_\infty$, and $\pi(g_n) (p_nx - y_n) \pi(g_n^{-1})$ converges strongly to $0$.

Since $\Phi$ is normal and $N$-bimodular we then have 
$$
\lim_{n \to \infty} \Phi(\pi(g_n) p_n x \pi(g_n^{-1}) ) - \pi(g_n) p_n x \pi(g_n^{-1}) = \lim_{n \to \infty} \Phi(\pi(g_n) y_n \pi(g_n^{-1}) ) - \pi(g_n) y_n \pi(g_n^{-1}) =  0,
$$ 
where the limit is in the weak operator topology. Thus, using that $\Phi(x) - x \in \widetilde{\mathcal B_N}$ we have
\begin{align}
\| \langle (\Phi(x) - x) \hat{1}, \hat{1} \rangle \|_\infty 
& = \lim_{n \to \infty} \left| \int \sigma_{g_n}( \langle (\Phi(x) - x) \hat{1}, \hat{1} \rangle ) \, d\eta \right| \nonumber \\
& = \lim_{n \to \infty} \left| \int \langle (J \pi(g_n^{-1}) J)(\Phi(x)  - x) (J \pi(g_n) J) \hat{1}, \hat{1} \rangle \, d\eta \right| \nonumber \\
&= \lim_{n \to \infty} \left| \int \langle \pi(g_n)(\Phi(x)  - x) \pi(g_n^{-1}) \hat{1}, \hat{1} \rangle \, d\eta \right| \nonumber \\
&= \lim_{n \to \infty} \left| \int \langle (\Phi(\pi(g_n)x\pi(g_n^{-1}))  - \pi(g_n)x\pi(g_n^{-1})) \hat{1}, \hat{1} \rangle \, d\eta \right| \nonumber \\
&= \lim_{n \to \infty} \left| \int \langle (\Phi( \pi(g_n) (1 - p_n) x \pi(g_n^{-1}) ) - \pi(g_n) (1 - p_n) x \pi(g_n^{-1}) ) \hat{1}, \hat{1} \rangle d\eta \right| \nonumber \\
&\leq \limsup_{n \to \infty} \| \Phi(x) - x \|_\infty \| 1 - p_n \|_2 
< \| \Phi(x) - x \|_\infty \sqrt{\varepsilon}. \nonumber
\end{align}
Since $\varepsilon > 0$ was arbitrary, we conclude that $\langle (\Phi(x) - x) \hat{1}, \hat{1} \rangle$ is $0$ almost everywhere.

If $a, b \in N$, it then follows
$$
\| \langle (\Phi(x) - x) a \hat{1}, b \hat{1} \rangle \|_\infty
= \| \langle (\Phi(b^*xa) - b^*xa ) \hat{1}, \hat{1} \rangle \|_\infty
= 0,
$$
and since $N \subset L^2(N, \tau)$ is dense we then have $\Phi(x) = x$, and so $\Phi = {\rm id}$ since $x$ was arbitrary.
\end{proof}

\begin{cor}\label{cor:finiteindexprojection}
Using the notation above, we have $J(\pi(\Gamma_0)' \cap N)J \subset \mathcal B_N'$. In particular, $\mathcal B_N \subset \mathcal Z(N)' \cap L^\infty(B, \eta) \overline \otimes \mathcal B(L^2(N, \tau))$.
\end{cor}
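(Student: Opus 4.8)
The plan is to deduce the containment from the rigidity Theorem~\ref{thm:uniquemap} by exhibiting, for each unitary in the relative commutant, a conjugation map to which the theorem applies. Since $\pi(\Gamma_0)' \cap N$ is the norm-closed linear span of its unitaries, it suffices to show $JuJ \in \mathcal B_N'$ for every unitary $u \in \pi(\Gamma_0)' \cap N$. Given such a $u$, I would set $\Phi = \mathrm{Ad}(1 \otimes JuJ)$, that is, $\Phi(x) = (1 \otimes JuJ)\, x\, (1 \otimes Ju^*J)$, and view it as a map on $\mathcal B_N$ valued in $L^\infty(B,\eta) \overline\otimes \mathcal B(L^2(N,\tau))$. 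Proving $JuJ \in \mathcal B_N'$ is then equivalent to proving $\Phi = {\rm id}$ on $\mathcal B_N$, which is exactly the conclusion of Theorem~\ref{thm:uniquemap} once its hypotheses are verified with $P = \mathcal B_N$.

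Three of the four hypotheses are immediate from $JuJ \in N' = JNJ$: the map $\Phi$ is normal and unital (being conjugation by a unitary), and it is $N$-bimodular because $1 \otimes JuJ$ commutes with $1 \otimes N$, the copy of $N$ inside $L^\infty(B,\eta) \overline\otimes \mathcal B(L^2(N,\tau))$. The one substantive point, and the main obstacle, is to show that $\Phi$ lands in $\widetilde{\mathcal B_N}$, i.e. that $\Phi(x)$ commutes with $V_g := \sigma_g^0 \otimes (J\pi(g)J)$ for every $g \in \Gamma_0$. For this I would compute how $V_g$ conjugates $1 \otimes JuJ$: using $J^2 = 1$ one finds
$$
V_g (1 \otimes JuJ) V_g^{-1} = 1 \otimes \big( J \pi(g) u \pi(g^{-1}) J \big) = 1 \otimes JuJ,
$$
where the last equality is precisely where the hypothesis $u \in \pi(\Gamma_0)'$ is used, since $\pi(g) u \pi(g^{-1}) = u$ for $g \in \Gamma_0$. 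Thus $1 \otimes JuJ$ commutes with each $V_g$, $g \in \Gamma_0$; since any $x \in \mathcal B_N$ already commutes with all $V_\gamma$, $\gamma \in \Gamma$, conjugating $\Phi(x)$ by $V_g$ leaves it fixed, so $\Phi(x) \in \widetilde{\mathcal B_N}$. Theorem~\ref{thm:uniquemap} now gives $\Phi = {\rm id}$, hence $JuJ \in \mathcal B_N'$, and spanning over unitaries yields $J(\pi(\Gamma_0)' \cap N)J \subset \mathcal B_N'$.

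For the ``in particular'' statement I would note that $\mathcal Z(N) = N \cap \pi(\Gamma)' \subset N \cap \pi(\Gamma_0)' = \pi(\Gamma_0)' \cap N$, so the first part already gives $J \mathcal Z(N) J \subset \mathcal B_N'$. Since a central element $z$ satisfies $JzJ = z^*$ as a left-multiplication operator (right multiplication by $z^*$ coincides with left multiplication by $z^*$ on the center), one has $J\mathcal Z(N)J = \mathcal Z(N)$, and therefore $\mathcal Z(N) \subset \mathcal B_N'$. Combined with the defining inclusion $\mathcal B_N \subset L^\infty(B,\eta) \overline\otimes \mathcal B(L^2(N,\tau))$, this is exactly $\mathcal B_N \subset \mathcal Z(N)' \cap (L^\infty(B,\eta) \overline\otimes \mathcal B(L^2(N,\tau)))$. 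The entire argument hinges on the commutation identity above; everything else is formal manipulation of the standard form of $N$.
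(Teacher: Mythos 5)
Your proof is correct and takes essentially the same approach as the paper: both arguments produce a normal unital $N$-bimodular map on $\mathcal B_N$ landing in $\widetilde{\mathcal B_N}$ from the given commutant element and invoke Theorem~\ref{thm:uniquemap} to force it to be the identity. The only (cosmetic) difference is that the paper spans $J(\pi(\Gamma_0)' \cap N)J$ by its projections $p$ and uses the pinching map $x \mapsto pxp + (1-p)x(1-p)$, whereas you span by unitaries $u$ and use $\mathrm{Ad}(1 \otimes JuJ)$; the verification of the hypotheses and the conclusion are identical in both cases.
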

\begin{proof}
It is enough to show that projections $p \in  J(\pi(\Gamma_0)' \cap N)J \subset L^\infty(B, \eta) \overline \otimes \mathcal B(L^2(N, \tau))$, commute with $\mathcal B_N$. Consider the map $\mathcal B_N \ni x \mapsto \Phi(x) = p x p + (1 - p) x (1 - p)$. Then $\Phi$ is a normal unital completely positive map that restricts to the identity on $N$ and hence is $N$-bimodular. Moreover, $\Phi(x) \in \{ \sigma^0_\gamma \otimes (J \pi(\gamma) J) \mid \gamma \in \Gamma_0 \}'$ since $p \in \mathcal P(J(\pi(\Gamma_0)' \cap N)J)$. Hence, by Theorem~\ref{thm:uniquemap} we have $\Phi = {\rm id}$, i.e., $p \in \mathcal B_N'$.
\end{proof}

\section{Actions of Howe-Moore groups on finite von Neumann algebras}

Recall, that an automorphism $\theta$ of a von Neumann algebra $M$ is properly outer if there is no non-zero element $v \in M$ such that $\theta(x) v = v x$ for all $x \in M$. An action $\alpha: \Lambda \to {\rm Aut}(M, \tau)$ of a countable group $\Lambda$ is properly outer if $\alpha_\lambda$ is properly outer for each $\lambda \in \Lambda \setminus \{ e \}$.

\begin{prop}\label{prop:freerestriction}
Let $G$ be a locally compact group, suppose that $\Lambda < G$ is a countable dense subgroup such that either
\begin{enumerate}[$($i$)$]
\item\label{item:freeA} $G$ is a product of non-compact connected simple groups with the Howe-Moore property, and the $\Lambda$ intersection with any proper subproduct of $G$ is trivial; or
\item\label{item:freeB} $G$ is a simple group with the Howe-Moore property, and $\Lambda$ contains and commensurates a square-integrable lattice $\Gamma < G$, which has a non-torsion element.
\end{enumerate}
Suppose $\alpha: G \to {\rm Aut}(M_0, \tau)$ is a continuous ergodic, trace preserving action of $G$ on a non-trivial finite von Neumann algebra $M_0$ with normal faithful trace $\tau$, then the restriction of $\alpha$ to $\Lambda$ is properly outer.
\end{prop}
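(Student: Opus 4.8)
The plan is to argue by contradiction. Suppose some $\lambda\in\Lambda$, $\lambda\neq e$, is not properly outer, so there is a nonzero $v\in M_0$ with $\alpha_\lambda(x)v=vx$ for all $x\in M_0$. A direct computation (taking adjoints and products) gives $v^*v,\,vv^*\in\mathcal Z(M_0)$, and since $\langle\kappa_\lambda\hat v,\hat v\rangle=\tau(v^*\alpha_\lambda(v))=\tau(vv^*)=\|\hat v\|_2^2$ with $\|\kappa_\lambda\hat v\|_2=\|\hat v\|_2$, equality in Cauchy--Schwarz forces $\alpha_\lambda(v)=v$; here $\kappa$ denotes the Koopman representation of $\alpha$ on $L^2(M_0,\tau)$. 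The first reduction is to the factor case: $G$ preserves $\tau$ and acts ergodically, hence ergodically on $\mathcal Z(M_0)$, and using the standard dichotomy for automorphisms (not properly outer $\Leftrightarrow$ inner on a nonzero invariant central corner) I would reduce to the situation where $M_0$ is a factor and $\alpha_\lambda=\mathrm{Ad}(u)$ for a unitary $u\in\mathcal U(M_0)$. Tracking $G$-invariance while cutting by a central projection that need not be $G$-invariant is a technical point to handle with care (one passes through the ergodic $G\actson\mathcal Z(M_0)$ and a fibrewise factor decomposition).

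The core of the argument is to feed innerness into Howe--Moore. Since $\alpha$ is a $G$-action, $\alpha_{g\lambda g^{-1}}=\mathrm{Ad}(\alpha_g(u))$ for every $g\in G$. Restricting to the centralizer $Z_G(\lambda)$, both $u$ and $\alpha_g(u)$ implement $\alpha_\lambda$, so in the factor $M_0$ they differ by a scalar: $\alpha_g(u)=\chi(g)\,u$ for a continuous character $\chi:Z_G(\lambda)\to\mathbb T$. Hence $\zeta:=\hat u-\tau(u)\hat 1\in L^2(M_0,\tau)\ominus\mathbb C\hat 1$ satisfies $\kappa_g\zeta=\chi(g)\zeta$ for all $g\in Z_G(\lambda)$, and $\zeta\neq0$ unless $\alpha_\lambda=\mathrm{id}$. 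Ergodicity gives that $\kappa$ has no nonzero $G$-invariant vector on $L^2(M_0,\tau)\ominus\mathbb C\hat 1$, so by Howe--Moore the coefficient $g\mapsto\langle\kappa_g\zeta,\zeta\rangle$ lies in $C_0(G)$; since it has constant modulus $\|\zeta\|_2^2>0$ on $Z_G(\lambda)$, the closed subgroup $Z_G(\lambda)$ is relatively compact, hence compact. Thus a non-properly-outer $\lambda$ with $\alpha_\lambda\neq\mathrm{id}$ forces $Z_G(\lambda)$ compact, i.e. $\lambda$ elliptic with compact centralizer; the degenerate case $\alpha_\lambda=\mathrm{id}$ is absorbed by applying the same Howe--Moore estimate to any nonzero vector of $L^2(M_0,\tau)\ominus\mathbb C\hat 1$, which places $\lambda$ in a compact subgroup.

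It remains to rule out a nontrivial $\lambda\in\Lambda$ with $Z_G(\lambda)$ compact, and I expect this last step to be the main obstacle. In case (i), Howe--Moore for the product $G=\prod_iG_i$ is only available on the subrepresentation where no factor $G_i$ has invariant vectors; the $G_i$-invariant vectors span $L^2$ of the $G$-invariant subalgebra $M_0^{G_i}$, on which $G$ acts through the proper subproduct $\prod_{j\neq i}G_j$. Using that $\Lambda$ meets every proper subproduct trivially (so $\Lambda$ injects with dense image into each subproduct and the image of $\lambda$ remains nontrivial), I would induct on the number of factors to reduce to the case where $\kappa$ has no factor-invariant vectors; there $Z_G(\lambda)=\prod_iZ_{G_i}(\lambda_i)$ being compact, together with density of $\Lambda$ and triviality of the intersections, should force $\lambda=e$. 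In case (ii), $G$ is simple and Howe--Moore applies directly; here I would exploit that $\Lambda$ commensurates the lattice $\Gamma$, together with the existence of a non-torsion element of $\Gamma$, to show that an elliptic element of the commensurator with compact centralizer must be trivial (e.g.\ via the finiteness of $\Gamma\cap Z_G(\lambda)$ forced by discreteness of $\Gamma$ and compactness of $Z_G(\lambda)$, played against the commensuration relation and the non-torsion element). Making this final step precise -- converting compactness of $Z_G(\lambda)$ into $\lambda=e$ from the intersection condition in (i) and from commensuration together with the non-torsion hypothesis in (ii), along with the reduction to the factor case and the product-representation bookkeeping -- is the technical heart of the argument.
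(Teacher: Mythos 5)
Your reduction to the factor case fails, and the issue is not merely technical. When $\mathcal Z(M_0)\neq\mathbb C$, ergodicity of $G \actson \mathcal Z(M_0)$ means $G$ permutes the fibres of the central decomposition without preserving any of them, so there is no induced $G$-action on a single fibre to which a factor-case statement could be applied ``fibrewise.'' What non-proper-outerness actually yields (the paper gets this via the polar decomposition of $v$) is a nonzero central projection $v^*v$ such that $\alpha_\lambda$ acts trivially on $\mathcal Z(M_0)v^*v$, i.e.\ a non-essentially-free ergodic probability measure-preserving action of $\Lambda$ on the spectrum of $\mathcal Z(M_0)$. Excluding that is precisely the content of Theorems 7.2 and 7.7 of \cite{creutzpeterson} (the Stuck--Zimmer-type freeness theorems for dense subgroups, resp.\ commensurators), which is how the paper disposes of the non-factor case; your outline contains no substitute for this input.

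In the factor case, your Howe--Moore argument does correctly show that a nontrivial inner $\alpha_\lambda$ forces $Z_G(\lambda)$ to be compact, but the statement you then need --- that a nontrivial $\lambda\in\Lambda$ cannot have compact centralizer --- is false under the hypotheses, so this step cannot be ``made precise.'' For case (ii): $G = PSL_2(\mathbb R)$, $\Gamma = PSL_2(\mathbb Z)$, $\Lambda = PSL_2(\mathbb Q)$ satisfy all the assumptions, yet the class of $\left(\begin{smallmatrix} 0&1\\ -1&0\end{smallmatrix}\right)$ is a nontrivial element of $\Lambda$ whose centralizer $PSO(2)$ is compact. For case (i): the image of $PSL_2(\mathbb Q(\sqrt 2))$ in $PSL_2(\mathbb R)\times PSL_2(\mathbb R)$ under the two Galois embeddings is dense, meets both factors trivially, and contains $(J,J)$ with $J$ as above, again with compact centralizer. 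This is why the paper never argues from a single element: it uses that $H = \{g\in G \mid \alpha_g \in {\rm Inn}(M_0)\}$ is a \emph{normal} subgroup of $G$, dense because it meets $\Lambda$ nontrivially. In case (i), normality allows one to project $H$ (minus a suitable subgroup $H_0$) to a well-chosen simple factor $G_0$ and produce a \emph{dense} set of elements of $G_0$ generating precompact subgroups, which contradicts Platonov's theorem \cite{platonov} for the connected Lie group $G_0$ --- this is exactly where the connectedness assumption in (i) enters, via \cite{rothman}. In case (ii), the normal subgroup theorem for commensurators \cite{creutzshalom} puts a finite-index subgroup of $\Gamma$ inside $H$, hence a non-torsion $\gamma_0$ acts innerly; its implementing unitary is fixed by every $\alpha_{\gamma_0^n}$, contradicting mixing (Howe--Moore). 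Compactness of one centralizer is simply not a contradiction; the proof has to exploit many group elements acting innerly at once.
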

\begin{proof}
Suppose $g \in G$, and $v \in M_0$ is such that $\alpha_g(x) v = v x$ for all $x \in M_0$, then it follows that $| v | \in \mathcal Z(M_0)$ and so replacing $v$ with the partial isometry in its polar decomposition we may assume that $v$ is a partial isometry, and that $v^*v \in \mathcal Z(M_0)$. 

We then have $v = v v^*v = v^*v v$, and so $v^*v \geq v v^*$. As $M_0$ is finite we must have $v^*v = vv^*$, and so $\alpha_g(v^*v) v v^*= v (v^* v) v^* = v v^* = v^*v$. Thus, $\alpha_g(v^*v) \geq v^*v$ and as $\alpha$ is trace preserving we then have $\alpha_g(v^*v) = v^*v$. 

Moreover, if $q \in \mathcal Z(M_0)$ such that $q \leq v^*v = vv^*$ then we have $\alpha_g(q) = \alpha_g(q) v v^* = v q v^* = q$. Therefore $\alpha_g$ acts trivially on $\mathcal Z(M_0) v^*v$. Hence, if $\Lambda$ does not act properly outerly, and if $\mathcal Z(M_0) \not= \mathbb C$ then the restriction of the $\Lambda$-action to $\mathcal Z(M_0)$ is not free.

For case (\ref{item:freeA}), Theorem 7.3 in \cite{creutzpeterson} gives that the $\Lambda$-stabilizers are finite almost everywhere and indeed the proof shows that they are contained in the kernel of the $G$-action but here $\Lambda \cap \mathrm{ker}(\alpha) = \{ e \}$, so the action being nonfree is a contradiction.

For case (\ref{item:freeB}), as $H = \{ h \in G : \alpha_{h} \in \mathrm{Inn}(M_{0}) \}$ is a normal subgroup of $G$, if $\alpha_{\lambda}$ is not properly outer for some $\lambda \ne e$ then $H$ is nontrivial hence $H = G$ as $G$ is simple.  Theorem 7.9 in \cite{creutzpeterson} gives that the $\Lambda$-stabilizers are finite almost everywhere.  As there only countably many finite subgroups of $\Lambda$, there must exist some finite $F < \Lambda$ which stabilizes all points in a positive measure set $E_{F}$.  Since all points in $gE_{F}$ are stabilized by $gFg^{-1}$ and $gE_{F}$ has the same measure as $E_{F}$, there are a finite number of finite groups which are stabilizers.  So the set of $\lambda$ for which $\alpha_{\lambda}$ is not properly outer is finite and so we would have $\Lambda = \Lambda \cap H$ is finite, contradicting the existence of a nontorsion element.

Thus, we have left to consider the case when $M_0$ is a factor.
Suppose $h \in G$ and $v \in M_0$ is non-zero partial isometry such that 
\begin{align}\label{eq:inner}
\alpha_h(x) v = v x,
\end{align}
for all $x \in M_0$. Then since $M_0$ is a factor we have $v^*v = 1$, and so $v$ is a unitary. 
If $g \in G$, then replacing $x$ with $\alpha_g(x)$ and applying $\alpha_{g^{-1}}$ to (\ref{eq:inner}) gives 
\begin{align}\label{eq:conj}
\alpha_{g^{-1} h g}(x) \alpha_{g^{-1}}(v) = \alpha_{g^{-1}}(v) x,
\end{align} 
for all $x \in M_0$.

If we assume by contradiction that the action restricted to $\Lambda$ is not properly outer, then we have that $H = \{ h \in G \mid \alpha_h \in {\rm Inn}(M_0) \}$ is a normal subgroup of $G$, which has non-trivial intersection with $\Lambda$, and hence by hypothesis must be dense.

We now focus on cases (\ref{item:freeA}) and (\ref{item:freeB}) separately. In case (\ref{item:freeA}), by projecting down to the quotient $G / \ker(\alpha)$ we may assume that the action is faithful (note that by hypothesis we have that $\Lambda \cap \ker(\alpha) = \{ e \}$). If $G_0 < G$ is a non-trivial simple factor of $G$, then we denote by $H_0$ the subgroup of $H$ consisting of those elements $h \in H$ such that the unitary $v$ in (\ref{eq:inner}) is fixed by $G_0$. It follows easily from (\ref{eq:conj}) that $H_0$ is a normal subgroup of $G$. Note that if we take the intersection over all such subgroups $H_0$ as we vary the factor, then by ergodicity we obtain the trivial group. Thus, we may choose a factor $G_0$, so that $H \not= H_0$.

We may decompose $G$ as $G = \widehat{G_0} \times G_0$, and if $h \in H \setminus H_0$, and $v \in \mathcal U(M_0)$ such that $\alpha_h(x) v = v x$ for all $x \in M_0$, then writing $h$ as $(g_1, g_2) \in \widehat{G_0} \times G_0$ we have 
$$
\alpha_{h}(x) \alpha_{g_2^{-1}}(v) 
= \alpha_{g_2^{-1}}( \alpha_h(\alpha_{g_2}(x)) v) 
= \alpha_{g_2^{-1}}(v \alpha_{g_2}(x))
= \alpha_{g_2^{-1}}(v) x,
$$ 
for all $x \in M_0$. Combining this with (\ref{eq:inner}) then gives $v^* \alpha_{g_2^{-1}}(v) \in \mathcal Z(M_0) = \mathbb C$, and so we have $\alpha_g(v) \in \mathbb T v$ for all $g \in \overline{\langle g_2 \rangle}$. 

Suppose $\overline{\langle g_{2} \rangle}$ is not compact so there exists $g_{n} \in \overline{\langle g_{2} \rangle}$ with $g_{n} \to \infty$ in $G_{0}$.  As $\alpha_{g_{n}}(v) \in \mathbb{T}v$, there is $\{ n_{t} \}$ such that $\alpha_{g_{n_{t}}}(v) \to av$ for some $a \in \mathbb{T}$.  Since $G_{0}$ has the Howe-Moore property and $g_{n_{t}} \to \infty$, this would mean that all of $G_{0}$ fixes $v$ contradicting that $h \notin H_{0}$.

Since the projection of $H \setminus H_0$ onto $G_0$ is dense ($H_0$ is normal and so either the projection of $H_0$ to $G_0$ is trivial, or else the projection is dense and so a non-trivial $H_0$ coset will project densely) it then follows that $G_0$ has a dense set of elements that generate precompact subgroups. This then gives a contradiction since $G_0$ is connected and is a product of groups with the Howe-Moore property hence must be a connected real Lie group \cite{rothman}, and Theorem 3 in \cite{platonov} shows that there then cannot be a dense set of $g \in G_0$ such that $\overline{\langle g_0 \rangle}$ is compact. 

For case (\ref{item:freeB}) we are assuming that $\Lambda$ contains and commensurates a square-integrable lattice $\Gamma$ that has a non-torsion element $\gamma_{0}$ and that $G$ is simple.  As $H$ is normal in $G$, then $H = G$ and in particular $\gamma_{0} \in H$ so there exists $v \in \mathcal U(M_0)$ such that $\alpha_{\gamma}(x) v = v x$ for all $\gamma \in \langle \gamma_0 \rangle$.  

If $v \in \mathbb{T}$ then $\alpha_{\gamma_{0}}(x) = x$ for all $x \in M_{0}$ meaning $\gamma_{0} \in \mathrm{ker}(\alpha)$ but $\gamma_{0} \in \Lambda$ and $\Lambda \cap \mathrm{ker}(\alpha) = \{ e \}$ contradicting that $\gamma_{0}$ is not torsion.

Then $\alpha_{\gamma}(v) = v$ for all $\gamma \in \langle \gamma_0 \rangle$ showing that the action restricted to $\Gamma$ is not mixing (as $v \notin \mathbb{T}$ and $\gamma_{0}$ is not torsion). But since $G$ has the Howe-Moore property, the action must be mixing, and hence must be mixing when restricted to $\Gamma$, giving a contradiction.
\end{proof}

We remark that in the proof in part (\ref{item:freeA}) of the previous theorem we did not actually need that $\Lambda$ was dense. Thus, in this case if $\alpha: G \to {\rm Aut}(M_0, \tau)$ is a continuous, ergodic, trace preserving action on a non-trivial finite von Neumann algebra $M_0$, then $\alpha_g$ is properly outer for any $g \in G$ that is not contained in a proper subproduct of $G$.

Part (\ref{item:freeA}) in the previous theorem also generalizes a result by Segal and von Neumann who showed that a simple real Lie group cannot embed continuously into a finite von Neumann algebra \cite{segalvonneumann}. While we will not use it in the sequel, we show here a general extension of Segal and von Neumann's result to the totally disconnected setting. We thank the referee for the elegant and simple proof.

\begin{thm}\label{thm:noembedding}
Let $G$ be a totally disconnected compactly generated group having no proper open normal subgroup. Then there is no non-trivial continuous homomorphism of $G$ into the unitary group of a finite von Neumann algebra.
\end{thm}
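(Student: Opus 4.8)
The plan is to reduce Theorem~\ref{thm:noembedding} to the proper outerness statement already established for Howe--Moore groups, so that a nontrivial continuous homomorphism $\pi: G \to \mathcal U(M)$ is forced to produce an inner automorphism where outerness is required. Suppose such a $\pi$ exists with $\pi(G)'' = M_0 \subset M$ a finite von Neumann subalgebra; without loss of generality we may take $M = M_0$. The first step is to manufacture a genuine \emph{action} from the homomorphism: conjugation $\alpha_g(x) = \pi(g) x \pi(g)^{-1}$ defines a continuous trace-preserving action $\alpha: G \to {\rm Aut}(M_0, \tau)$. Continuity in the strong topology follows from continuity of $\pi$ into $\mathcal U(M_0)$ with respect to the $\|\cdot\|_2$-norm, which is itself part of what ``continuous homomorphism into a finite von Neumann algebra'' should mean. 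The key observation is that $\pi$ itself witnesses non-proper-outerness: taking $v = \pi(g)$, the relation $\alpha_g(x)\pi(g) = \pi(g)x$ holds for all $x \in M_0$, so \emph{every} $\alpha_g$ is inner, and in fact implemented by the unitary $v = \pi(g) \in M_0$.

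The second step is to arrange the hypotheses of part~(\ref{item:freeA}) of Proposition~\ref{prop:freerestriction}, or rather its variant noted in the remark immediately following that proposition. Since $G$ is simple, every $g \ne e$ lies outside any proper subproduct (there being none), so the remark gives that $\alpha_g$ is properly outer for all $g \ne e$ \emph{provided} $\alpha$ is ergodic and $M_0$ is nontrivial. This is the crux, and the main obstacle: I must produce \emph{ergodicity} of the conjugation action. Here the Howe--Moore property should be leveraged directly. The fixed-point algebra $M_0^G = \{x \in M_0 : \alpha_g(x) = x \ \forall g\}$ equals $\pi(G)' \cap M_0$, which contains the center $\mathcal Z(M_0)$; but more is needed, namely that the action on $\mathcal Z(M_0)$ — equivalently the Koopman representation on $L^2(M_0,\tau) \ominus \mathbb C\hat 1$ — has no invariant vectors, whence by Howe--Moore mixing the action is ergodic. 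The natural route is to pass to the central decomposition and observe that $G$ connected and simple acts continuously on the (standard) spectrum of $\mathcal Z(M_0)$; invariant central projections would give a nontrivial quotient representation, and one argues these must be trivial.

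If the action fails to be ergodic, the plan is to cut down by a minimal $G$-invariant central projection rather than to prove global ergodicity outright: decompose $\mathcal Z(M_0)$ under the $G$-action and restrict attention to an ergodic component $M_0 p$ for a suitable invariant projection $p$. On such a component the action is ergodic by construction, and if $M_0 p \ne \mathbb C$ we obtain the desired contradiction from proper outerness, since $\pi(g)p$ still implements $\alpha_g$ innerly on $M_0 p$. The only remaining possibility is that every ergodic component is one-dimensional, i.e., $\alpha$ acts trivially on $M_0$, which forces $\pi(G) \subset \mathcal Z(M_0) \cap \mathcal U(M_0)$; but then $\pi$ is a continuous homomorphism of the simple connected group $G$ into an \emph{abelian} group, hence factors through the abelianization and is trivial since $G$ is perfect (being connected semisimple / simple).

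The hardest step is securing ergodicity and handling the factor-versus-center dichotomy cleanly; I expect this to be where the Howe--Moore property does real work rather than serving as a formality. A subtlety worth flagging is the precise meaning of ``continuous homomorphism into $\mathcal U(M)$'': the argument needs $g \mapsto \pi(g)$ continuous for the strong (equivalently $\|\cdot\|_2$) topology so that $\alpha$ is a continuous action in the sense of Proposition~\ref{prop:freerestriction}, and one should verify that the hypothesis of the theorem supplies exactly this. Granting that, the proof is short: every $\alpha_g$ is inner by fiat of $\pi$, proper outerness forbids this on any nontrivial ergodic piece, and the only escape — triviality of $\alpha$ — collapses $\pi$ to a map into the center, which vanishes by perfectness of $G$. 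I would therefore present it as: build $\alpha$, reduce to an ergodic nontrivial component, invoke the remark after Proposition~\ref{prop:freerestriction} for the contradiction, and dispatch the trivial-action case via perfectness.
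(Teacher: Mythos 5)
Your reduction collapses at its key step: the remark after Proposition~\ref{prop:freerestriction} (and case~(\ref{item:freeA}) of that proposition, which the remark refers to) applies only when $G$ is a product of non-compact \emph{connected} simple groups with the Howe--Moore property, whereas Theorem~\ref{thm:noembedding} concerns precisely the \emph{totally disconnected} case. The connectedness hypothesis is not cosmetic: the proof of case~(\ref{item:freeA}) derives its contradiction by citing Rothman (a connected Howe--Moore group is a real Lie group) and Platonov's theorem that such a group admits no dense set of elements generating precompact subgroups. In a totally disconnected locally compact group this mechanism is unavailable --- every element of a compact open subgroup generates a precompact subgroup, so such elements form an open set --- and no analogue of Platonov's theorem holds. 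Case~(\ref{item:freeB}) does not help either: it presupposes a countable dense subgroup $\Lambda$ commensurating a lattice and yields proper outerness only for elements of $\Lambda$, not of all of $G$. Worse, demanding proper outerness of the conjugation action $\alpha_g = {\rm Ad}(\pi(g))$ on $\pi(G)''$ is essentially the assertion you are trying to prove --- it fails exactly when $\pi$ is non-trivial --- so any appeal to a proper-outerness result for totally disconnected $G$ would beg the question. (Your worry about ergodicity, by contrast, is moot: after reducing to a factor via \cite{thoma1}, as the paper does, the conjugation action is automatically ergodic because its fixed-point algebra is the center.)

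The paper's actual proof is along entirely different lines, and the difference is forced by the above. It uses the compact open subgroup $K < G$ to produce a non-trivial $K$-invariant vector for $\pi \otimes \pi^{\rm op}$, uses finiteness of $(\pi \otimes \pi^{\rm op})(G)''$ to produce an approximately tracial vector, and then invokes Bergman--Lenstra: since $K_0$ is commensurated by $G$ and $G$ is non-discrete and simple, the indices $[K_0 : K_0 \cap g K_0 g^{-1}]$ are unbounded, so $\cup_{g \in G}\, g K_0 g^{-1}$ has infinite Haar measure while the associated positive type function stays within $1/2$ of $1$ on it. This contradicts mixing, and the Howe--Moore property then forces invariant vectors and hence triviality of $\pi$. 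To salvage your outline you would need a proper-outerness statement valid for totally disconnected Howe--Moore groups acting on the finite factor they generate, and that statement is equivalent to the theorem itself.
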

\begin{proof}
Let $\pi: G \to \mathcal U(M)$ be a homomorphism into a tracial von Neumann algebra $(M, \tau)$.  Set $H = \ker(\pi)$, a closed normal subgroup, and let $q: G \to G/H$ denote the quotient map. The sets $E_\varepsilon = \{ g \in G \mid | 1 - \tau(\pi(g)) | < \varepsilon \}$ then give a decreasing family of open sets.  Let $K < G/H$ be an arbitrary compact open subgroup and $C$ a symmetric compact generating set for $G/H$.  Then $C(K) = \cup_{g \in C}~gKg^{-1}$ is relatively compact so there exists $\epsilon > 0$ such that $(C(K) \setminus K) \cap q(E_{\epsilon}) = \emptyset$.  For $g \in C$ we then have $g(q(E_{\epsilon}) \cap K)g^{-1} \subset q(E_{\epsilon}) \cap C(K) = q(E_{\epsilon}) \cap K$.  As $C$ is symmetric, $q(E_{\epsilon}) \cap K$ is then a $C$-invariant set hence $G/H$-invariant.  As $q(E_{\epsilon})\cap K$ is open, $K$ then contains an open normal subgroup of $G/H$.  As $G$ has no proper open normal subgroup, neither does $G/H$ so we conclude that $G/H = K$. Since $K < G/H$ was an arbitrary open subgroup, it then follows that $H = G$, i.e., $\pi$ is the trivial homomorphism.
\end{proof}

If $G$ is a Polish group, $\Lambda$ is a countable group, and we have a homomorphism $\iota: \Lambda \to G$ with dense image, then given a representation $\pi: \Lambda \to \mathcal U(M)$ into a finite von Neumann algebra we may consider 
$$
M_0 = \{ x \in M \mid \| \pi(\lambda_n) x \pi(\lambda_n^{-1}) - x \|_2 \to 0 { \rm \ whenever \ } \iota(\lambda_n) \to e { \rm \ in \ } G \}.
$$ 
Note that $M_0 \subset M$ is a von Neumann subalgebra such that $\pi(\lambda) M_0 \pi(\lambda^{-1}) = M_0$ for all $\lambda \in \Lambda$. We therefore obtain a continuous action $\alpha: G \to {\rm Aut}(M_0, \tau)$ by defining $\alpha_g(x) = \lim_{\iota(\lambda) \to g, \lambda \in \Lambda} \pi(\lambda) x \pi(\lambda^{-1})$. We will call $M_0$ the $G$-algebra (with respect to the map $\iota: \Lambda \to G$) of the representation $\pi$.

\begin{lem}\label{lem:continuouscore}
Let $G$ be a Polish group, suppose that $\Lambda < G$ is a countable dense subgroup such that any trace preserving ergodic action of $G$ on a finite von Neumann algebra is properly outer when restricted to $\Lambda$. Then for any representation $\pi: \Lambda \to \mathcal U(M)$ into a finite factor $M$, such that $\pi(\Lambda)'' = M$, the $G$-algebra of $\pi$ is $\mathbb C$.
\end{lem}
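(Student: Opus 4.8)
The plan is to test the proper outerness hypothesis against the conditional expectation of the generators $\pi(\lambda)$. Let $E : M \to M_0$ be the unique trace-preserving conditional expectation (which exists since $M_0 \subset M$ is a von Neumann subalgebra of a finite von Neumann algebra), and set $b_\lambda = E(\pi(\lambda)) \in M_0$ for each $\lambda \in \Lambda$. The automorphism $\alpha_{\iota(\lambda)}$ of $M_0$ is visibly implemented as an inner automorphism of the \emph{ambient} factor $M$ by $\pi(\lambda)$, but $\pi(\lambda)$ need not lie in $M_0$; the idea is that its conditional expectation $b_\lambda$ still intertwines the action inside $M_0$, and this is exactly what contradicts proper outerness.

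First I would record two facts. Since $\iota(\lambda)$ lies in the image of $\iota$, evaluating the defining limit on the constant net shows $\alpha_{\iota(\lambda)}(x) = \pi(\lambda) x \pi(\lambda)^{-1}$ for every $x \in M_0$ (recall $M_0$ is invariant under conjugation by $\pi(\lambda)$). Using that $E$ is $M_0$-bimodular, for $x \in M_0$ one then computes
\begin{align}
\alpha_{\iota(\lambda)}(x)\, b_\lambda
&= E\big(\alpha_{\iota(\lambda)}(x)\, \pi(\lambda)\big)
= E\big(\pi(\lambda) x \pi(\lambda)^{-1} \pi(\lambda)\big) \nonumber \\
&= E(\pi(\lambda) x) = b_\lambda\, x, \nonumber
\end{align}
so that $b_\lambda$ intertwines $\alpha_{\iota(\lambda)}$ with the identity on $M_0$. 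Second, I claim $\alpha$ is ergodic in the strong sense $M_0^G = \mathbb C$: if $x \in M_0^G$ then $\pi(\lambda) x \pi(\lambda)^{-1} = \alpha_{\iota(\lambda)}(x) = x$ for all $\lambda$, so $x \in \pi(\Lambda)' \cap M = \mathcal Z(M) = \mathbb C$ as $M$ is a factor. In particular $\mathcal Z(M_0)^G = \mathbb C$, so the hypothesis is applicable once we know $M_0 \neq \mathbb C$.

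With these in hand, suppose toward a contradiction that $M_0 \neq \mathbb C$. Then $\alpha$ is a continuous, trace-preserving, ergodic action of $G$ on a non-trivial finite von Neumann algebra (continuity and trace-preservation being built into the construction of the $G$-algebra), so by hypothesis its restriction to $\Lambda$ is properly outer. For $\lambda \neq e$ the intertwining relation $\alpha_{\iota(\lambda)}(x)\, b_\lambda = b_\lambda\, x$ together with proper outerness of $\alpha_{\iota(\lambda)}$ forces $b_\lambda = 0$; that is, $E(\pi(\lambda)) = 0$ for every $\lambda \neq e$, while $E(\pi(e)) = 1$. Since $\{\pi(\lambda)\}$ spans a weakly dense $*$-subalgebra of $M$ and $E$ is normal (equivalently, its $L^2$-extension is the orthogonal projection onto $L^2(M_0, \tau)$), linearity and continuity give $E(M) \subseteq \mathbb C 1$. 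But $E(M) = M_0$, whence $M_0 = \mathbb C$, contradicting the assumption. Therefore $M_0 = \mathbb C$.

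I expect the main obstacle to be conceptual rather than computational: the implementers $\pi(\lambda)$ witnessing innerness live in $M$, whereas proper outerness only forbids implementers inside $M_0$ itself. The crux is to manufacture an implementer within $M_0$, which $b_\lambda = E(\pi(\lambda))$ supplies precisely because the $M_0$-bimodularity of $E$ preserves the intertwining identity, and to verify that the induced $G$-action on $M_0$ is genuinely ergodic so that the hypothesis applies at all; it is here that factoriality of $M$ (forcing $M_0^G \subseteq \mathcal Z(M) = \mathbb C$) enters decisively and, pleasantly, sidesteps any need to pass to an ergodic decomposition.
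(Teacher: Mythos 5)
Your proposal is correct and follows essentially the same route as the paper: both proofs apply the trace-preserving conditional expectation $E_0 : M \to M_0$ to $\pi(\lambda)$, use $M_0$-bimodularity to derive the intertwining identity $\alpha_\lambda(x) E_0(\pi(\lambda)) = E_0(\pi(\lambda)) x$, invoke proper outerness to conclude $E_0(\pi(\lambda)) = 0$ for $\lambda \neq e$, and then use $\pi(\Lambda)'' = M$ together with normality of $E_0$ to force $M_0 = E_0(M) = \mathbb C$. Your additional verifications (ergodicity via factoriality of $M$, and the density/normality step at the end) are details the paper states without elaboration, so there is nothing substantively different.
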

\begin{proof}
Let $M_0 \subset M$ be the $G$-algebra of $\pi$, and let $\alpha: G \to {\rm Aut}(M_0, \tau)$ be the associated continuous action as described above. Note that this is ergodic since $M$ is a factor. We denote by $E_0$ the trace preserving conditional expectation from $M$ to $M_0$. If $M_0 \not= \mathbb C$, then by assumption we have that the action $\alpha$ restricted to $\Lambda$ is properly outer. If $x \in M_0$, and $\lambda \in \Lambda$ then we have $\alpha_\lambda(x) E_0(\pi(\lambda) ) = E_0( \alpha_\lambda(x) \pi(\lambda) ) = E_0( \pi(\lambda) x ) = E_0( \pi(\lambda) ) x$, and hence since the action of $\Lambda$ is properly outer we must have $E_0(\pi(\lambda)) = 0$ for each $\lambda \in \Lambda \setminus \{ e \}$. Since $M = \pi(\Lambda)''$ we would then have $M_0 = E_0(M) = \mathbb C$, giving a contradiction.
\end{proof}

\begin{thm}\label{thm:smallcommutant}
Let $G$ be a Polish group, and $\Lambda < G$ a countable dense subgroup such that each proper closed normal subgroup of $G$ intersects trivially with $\Lambda$. Suppose $G \actson (Y, \eta)$ is ergodic, and $\pi: \Lambda \to \mathcal U(M)$ is a representation into a finite factor such that $M = \pi(\Lambda)''$, and such that the $G$-algebra with respect to $\pi$ is $\mathbb C$. If $N \subset M$ is a von Neumann subalgebra, and $\pi$ does not extend to an isomorphism $L\Lambda \xrightarrow{\sim} M$, then 
$$
\{ \sigma_\lambda^0 \otimes (J E_N(\pi(\lambda)) J) \mid \lambda \in \Lambda \}' \cap L^\infty(Y, \eta) \overline \otimes \mathcal B(L^2N) = 1 \otimes N.
$$
\end{thm}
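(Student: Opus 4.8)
The plan is to realize $L^\infty(Y,\eta)\,\overline\otimes\,\mathcal B(L^2N)$ as the algebra of essentially bounded measurable fields $y\mapsto T(y)\in\mathcal B(L^2N)$, acting on $L^2(Y,\eta)\,\overline\otimes\,L^2N$ by $(T\xi)(y)=T(y)\xi(y)$, and to unwind the commutation relation defining the left-hand side. Writing $R_\lambda=JE_N(\pi(\lambda))J\in N'$, a direct computation using $(\sigma_\gamma^0\xi)(y)=\xi(\gamma^{-1}y)\sqrt{d\gamma\eta/d\eta}\,(y)$ shows that $T$ commutes with $\sigma_\lambda^0\otimes R_\lambda$ for every $\lambda\in\Lambda$ if and only if the covariance relation
\[
T(y)\,JE_N(\pi(\lambda))J = JE_N(\pi(\lambda))J\,T(\lambda^{-1}y)
\]
holds for a.e.\ $y$ and every $\lambda$. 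Since $R_\lambda\in N'$ commutes with $N$ and constant fields are $\Lambda$-invariant, the inclusion $1\otimes N\subset\{\sigma_\lambda^0\otimes R_\lambda\}'\cap(L^\infty(Y,\eta)\,\overline\otimes\,\mathcal B(L^2N))$ is immediate, so only the reverse inclusion needs proof. I would also record at the outset the algebraic fact that, because $M=\pi(\Lambda)''$, the family $\{E_N(\pi(\lambda)):\lambda\in\Lambda\}$ generates $N$: if $n\in N$ is orthogonal in $L^2N$ to every $E_N(\pi(\lambda))$, then $\langle\hat n,\widehat{\pi(\lambda)}\rangle=\tau(\pi(\lambda)^*n)=\tau(E_N(\pi(\lambda))^*n)=0$ for all $\lambda$, and density of $\mathrm{sp}\,\pi(\Lambda)$ in $L^2M$ forces $n=0$. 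Since this family is self-adjoint, $\{R_\lambda\}''=JNJ=N'$ and hence $\{R_\lambda\}'=N$.

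These two observations reduce the theorem to a single statement: every $T$ in the commutant is essentially constant in $y$. Indeed, if $T(y)=T_0$ a.e., then the covariance relation collapses to $T_0R_\lambda=R_\lambda T_0$ for all $\lambda$, so $T_0\in\{R_\lambda\}'=N$ by the generation fact, and therefore $T=1\otimes T_0\in 1\otimes N$. The entire weight of the argument thus falls on proving constancy, and this is where I would deploy the remaining hypotheses.

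To prove constancy I would exploit the density of $\Lambda$ in $G$, the ergodicity of $G\actson(Y,\eta)$, and the triviality of the $G$-algebra $M_0=\mathbb C$. The point is that the covariance relation couples a translation in the $Y$-variable, $y\mapsto\lambda^{-1}y$, with an internal conjugation on $\mathcal B(L^2N)$ by $R_\lambda\sim\pi(\lambda)$; the first is handled by ergodicity and the second by the $G$-algebra. First, density together with continuity of the $G$-action on $L^\infty(Y,\eta)$ shows that $\Lambda$ already acts ergodically, so it suffices to see that $T$ is invariant under a subgroup of $\Lambda$ whose closure is all of $G$. The normal-subgroup hypothesis enters precisely here: the relevant "support" set of $\lambda$'s on which the conjugation is nontrivial is conjugation-invariant, so the subgroup it generates has closure a closed normal subgroup of $G$ meeting $\Lambda$ nontrivially (this is where the assumption that $\pi$ is \emph{not} the left regular representation is used, to guarantee this set is not reduced to $\{e\}$), hence equal to $G$. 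The substance is an averaging step: using density one averages the covariance relation along $\lambda_n\to e$ in $G$, for which $\lambda_n^{-1}y\to y$ in measure, to manufacture from $T$ a genuinely $G$-continuous field; the hypothesis $M_0=\mathbb C$ (in the spirit of Lemma~\ref{lem:continuouscore}) forbids any nonconstant continuous part, and combined with ergodicity this forces $T(y)$ to be essentially constant.

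The main obstacle is exactly this averaging/continuity step. The difficulty is twofold: $\pi$ is defined only on $\Lambda$ and does not extend continuously to $G$, and the operators $E_N(\pi(\lambda))$ appearing in $R_\lambda$ are non-unitary contractions, so the covariance relation cannot be inverted to a conjugation $T(\lambda^{-1}y)=R_\lambda^{-1}T(y)R_\lambda$ and the naive limit as $\lambda_n\to e$ is unavailable. The resolution is to process $T$ through the $G$-algebra, letting the averaging deposit the relevant data into the continuous part $M_0$ and then using $M_0=\mathbb C$ to collapse it to scalars. Controlling the non-invertibility of $E_N(\pi(\lambda))$—for instance by passing to support projections or by extracting unitary conjugations in the limit—while keeping all estimates uniform in $y$ is the technical heart of the proof.
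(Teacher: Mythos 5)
Your reduction is correct as far as it goes: the commutation relation does unwind to the covariance identity $T(y)R_\lambda = R_\lambda T(\lambda^{-1}y)$, the inclusion $1\otimes N$ in the commutant is immediate, $\{E_N(\pi(\lambda))\}$ does generate $N$ (so $\{R_\lambda\}'=N$), and hence it would indeed suffice to prove that every $T$ in the commutant is essentially constant. You have also correctly located where each hypothesis must enter (non-left-regularity produces a nontrivial conjugation-invariant set of group elements; the normal-subgroup hypothesis promotes it to all of $G$; ergodicity and $M_0=\mathbb C$ finish). But the proposal stops exactly where the proof has to begin: the constancy step is never carried out, and the obstacles you name — $\pi$ does not extend continuously to $G$, and the $E_N(\pi(\lambda))$ are non-invertible contractions so the covariance relation cannot be inverted and ``the naive limit as $\lambda_n\to e$ is unavailable'' — are real and are not resolved by the remedies you gesture at (support projections, extracting unitaries in the limit). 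As written, this is a plan with an acknowledged hole at its technical heart, not a proof.

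The paper's proof circumvents these obstacles by averaging the \emph{generators} rather than manipulating the commutant element $T$ at all. Pick $\lambda_0\neq e$ with $\tau(\pi(\lambda_0))\neq 0$ (this is where non-left-regularity enters), and for each neighborhood $O$ of $e$ form $\mathcal K_O = \overline{co}\{\pi(h\lambda_0h^{-1}) \mid h\in\Lambda\cap O\}$ inside $M$, with $\mathcal K=\bigcap_O \mathcal K_O$. The unique $\|\cdot\|_2$-minimal element $x\in\mathcal K$ is nonzero (every element of $\mathcal K$ has trace $\tau(\pi(\lambda_0))$), and the identity $\pi(\lambda_n)\mathcal K_{\lambda_n^{-1}O}\pi(\lambda_n^{-1})=\mathcal K_O$ together with uniqueness of the minimizer forces $\pi(\lambda_n)x\pi(\lambda_n^{-1})\to x$ whenever $\lambda_n\to e$; thus $x$ lies in the $G$-algebra, which is $\mathbb C$, so $x=\tau(\pi(\lambda_0))$. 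Consequently there are convex combinations $\sum_j\alpha_j\pi(\lambda_j\lambda_0\lambda_j^{-1})$, with all $\lambda_j$ near $e$, that are $\|\cdot\|_2$-close to the scalar $\tau(\pi(\lambda_0))$; since $E_N$ is $\|\cdot\|_2$-contractive and the Koopman part varies continuously, the corresponding convex combinations of generators $\sigma^0_{\lambda_j\lambda_0\lambda_j^{-1}}\otimes JE_N(\pi(\lambda_j\lambda_0\lambda_j^{-1}))J$ converge strongly to $\sigma^0_{\lambda_0}\otimes\tau(\pi(\lambda_0))$. Note that no inverses of $E_N(\pi(\lambda))$ are ever taken — the contractivity of $E_N$ is used in precisely the direction that is available. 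This shows $\sigma^0_{\lambda_0}\otimes 1$ lies in the von Neumann algebra $\mathcal Q$ generated by the $\sigma^0_\lambda\otimes JE_N(\pi(\lambda))J$; by traciality the set $\{g\in\Lambda : \tau(\pi(g))\neq 0\}$ is conjugation-invariant, so the closure of the subgroup it generates is a closed normal subgroup of $G$ meeting $\Lambda$ nontrivially, hence all of $G$, giving $\sigma^0_g\otimes 1\in\mathcal Q$ for every $g\in G$ and then $1\otimes JE_N(\pi(\lambda))J\in\mathcal Q$ as well. The commutant computation then splits into two trivial pieces: ergodicity of $G\actson(Y,\eta)$ kills the $L^\infty(Y,\eta)$ leg, and strong density of ${\rm sp}\{E_N(\pi(\lambda))\}$ in $N$ gives $\mathcal Q'\cap 1\otimes\mathcal B(L^2N)\subset 1\otimes(JNJ)'=1\otimes N$. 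If you want to complete your write-up, this enlargement of $\mathcal Q$ — not an analysis of $T$ — is the missing idea.
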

\begin{proof}
Set $\mathcal Q = \{ \sigma_\lambda^0 \otimes (J E_N(\pi(\lambda)) J) \mid \lambda \in \Lambda \}''$ (which is a subalgebra of $\mathcal{B}(L^{2}(Y,\eta) \otimes L^{2}N)$). As $\pi$ does not extend to an isomorphism $L\Lambda \xrightarrow{\sim} M$ we may fix $\lambda_0 \in \Lambda \setminus \{ e \}$ such that $|\tau(\pi(\lambda_0))| > 0$. For each non-empty open set $O \subset G$ we define $\mathcal K_O = \overline{co} \{ \pi(h \lambda_0 h^{-1}) \mid h \in \Lambda \cap O \}$ where the closure is taken in the $\| \cdot \|_2$-topology (which is equal to the closure in the weak operator topology since $\mathcal K_O$ is bounded in the uniform norm, and convex). We let $\mathcal K = \cap_{O \in \mathcal N(e)} \mathcal K_O$, where $\mathcal N(e)$ is the space of all open neighborhoods of the identity in $G$.

Since $\mathcal K$ is a non-empty $\| \cdot \|_2$-closed convex set it has a unique element $x \in \mathcal K$ that minimizes $\| \cdot \|_2$, and note that $x \not= 0$ since $\tau(y) = \tau(\pi(\lambda_0)) \not= 0$ for each $y \in \mathcal K$. If $\{ \lambda_n \} \subset \Lambda$ is a sequence such that $\lambda_n \to e$ in $G$, then as $\pi(\lambda_n) \mathcal K_{\lambda_n^{-1} O} \pi(\lambda_n^{-1}) = \mathcal K_O$, we have that for each $O \in \mathcal N(e)$, there is large enough $N \in \mathbb N$ such that $\pi(\lambda_n) x \pi(\lambda_n^{-1}) \in \mathcal K_O$ for all $n \geq N$. Consequently, if $y$ is any weak operator topology cluster point of the sequence $\{ \pi(\lambda_n) x \pi(\lambda_n^{-1}) \}$, then $y \in \mathcal K$, and $\| y \|_2 \leq \| x \|_2$, which implies $y = x$ by uniqueness.

Therefore $\pi(\lambda_n) x \pi(\lambda_n)$ converges to $x$ in the weak operator topology and hence 
$$
\| \pi(\lambda_n) x \pi(\lambda_n) - x \|_2^2 = 2 \| x \|_2^2 - 2 \Re(\langle \pi(\lambda_n) x \pi(\lambda_n), x \rangle) \to 0.
$$ 
Hence $x$ is in the $G$-algebra of $\pi$, which is $\mathbb C$ by hypothesis, and so $x = \tau(\pi(\lambda_0)) \in \mathbb C$.

We will now prove that $\sigma_{\lambda_0}^0 \otimes 1 \in \mathcal Q$. Indeed, suppose $\varepsilon > 0$, and we have vectors $\xi_1 \in L^2(Y, \eta)$, and $\xi_2 \in N \subset L^2M$, such that $\| \xi_1 \|_2, \| \xi_2 \|_\infty \leq 1$. Then by continuity of the $G$ action on $(Y, \eta)$ there exists an open neighborhood $O \in \mathcal N(e)$ such that $\| \sigma_{g \lambda_0 g^{-1}}^0( \xi_1) - \sigma_{\lambda_0}^0(\xi_1) \|_2 < \varepsilon$ for all $g \in O$. And from above, there exists a convex combination $\sum_{j = 1}^n \alpha_j \pi(\lambda_j \lambda_0 \lambda_j^{-1})$ such that $\lambda_j \in O$ for all $1 \leq j \leq n$, and $\| \sum_{j = 1}^n \alpha_j \pi(\lambda_j \lambda_0 \lambda_j^{-1}) - \tau(\pi(\lambda_0))  \|_2 < \varepsilon$. Hence,
\begin{align}
&\| ( \sigma^0_{\lambda_0} \otimes \tau(\pi(\lambda_0))  - \sum_{j = 1}^n \alpha_j \sigma^0_{\lambda_j \lambda_0 \lambda_j^{-1}} \otimes E_N(\pi(\lambda_j \lambda_0 \lambda_j^{-1})) ) (\xi_1 \otimes \xi_2) \|_2 \nonumber \\
& \hspace{.2in} \leq \| \sum_{j = 1}^n \alpha_j  \sigma_{\lambda_{0}}^{0} \otimes E_N(\pi(\lambda_j \lambda_0 \lambda_j^{-1}))  - \sum_{j = 1}^n \alpha_j \sigma^0_{\lambda_j \lambda_0 \lambda_j^{-1}} \otimes E_N(\pi(\lambda_j \lambda_0 \lambda_j^{-1})) ) (\xi_1 \otimes \xi_2) \|_2 \nonumber \\
& \hspace{.6in} + \| ( \sigma^0_{\lambda_0} \otimes \tau(\pi(\lambda_0))  - \sum_{j = 1}^n \alpha_j  \sigma_{\lambda_{0}}^{0} \otimes E_N(\pi(\lambda_j \lambda_0 \lambda_j^{-1})) ) (\xi_1 \otimes \xi_2) \|_2 \nonumber \\
& \hspace{.2in} \leq \sum_{j = 1}^n \alpha_j \| (\sigma_{\lambda_0}^0 - \sigma_{\lambda_j \lambda_0 \lambda_j^{-1}}^0)(\xi_1) \|_2 \| \xi_2 \|_2\nonumber \\
& \hspace{.6in} + \| \sigma_{\lambda_0}^0 \otimes (\tau(\pi(\lambda_0)) - \sum_{j = 1}^n \alpha_j E_N(\pi(\lambda_j \lambda_0 \lambda_j^{-1})) ) (\xi_1 \otimes \xi_2) \|_2 \nonumber \\
& \hspace{.2in} <  \varepsilon + \| E_N( \tau(\pi(\lambda_0)) - \sum_{j = 1}^n \alpha_j \pi(\lambda_j \lambda_0 \lambda_j^{-1})  \xi_2 \|_2 < \varepsilon + \varepsilon \| \xi_2 \|_\infty \leq 2 \varepsilon. \nonumber
\end{align}
As the operators above are uniformly bounded, and the span of vectors of the form $\xi_1 \otimes \xi_2$ is dense in $L^2(Y, \eta) \overline \otimes L^2N$,  we then have that $\sigma_{\lambda_0}^0 \otimes \tau(\pi(\lambda_0))$ (and hence also $\sigma_{\lambda_0}^0 \otimes 1$ since $\tau(\pi(\lambda_0)) \not= 0$) is in the strong operator closure of $\{ \sigma_\lambda^0 \otimes (J E_N(\pi(\lambda)) J) \mid \lambda \in \Lambda \}$.

We have therefore shown that $\sigma_g^0 \otimes 1 \in \mathcal Q$ whenever $g \in \Lambda$ such that $\tau(\pi(g) ) \not= 0$. As the set of such $g \in \Lambda$ is preserved under conjugation, we then have that the non-trivial subgroup $\Lambda_0 < \Lambda$ they generate is normal, and since the $\Lambda$ intersection with any proper subproduct of $G$ is trivial we then have $\overline{\Lambda_0} = G$ showing that $\sigma_g^0 \otimes 1 \in \mathcal Q$ for all $g \in G$, and hence we have $1 \otimes (J E_N(\pi(\lambda)) J) \in \mathcal Q$, for all $\lambda \in \Lambda$.

Since $\sigma_g^0 \otimes 1 \in \mathcal Q$ for all $g \in G$, it follows from ergodicity of the $G$ action on $(Y, \eta)$ that $\mathcal Q' \cap L^\infty(Y, \eta) \overline \otimes \mathcal B(L^2N) \subset 1 \otimes \mathcal B(L^2N)$. Also, since $\pi(\Lambda)'' = M$ we have that $\{ E_N(\pi(\lambda)) \mid \lambda \in \Lambda \}$ spans a strong operator topology dense subset of $N$, hence $\mathcal Q' \cap 1 \otimes \mathcal B(L^2N) \subset 1 \otimes (J N J)' = 1 \otimes N$.
\end{proof}

\section{Finite factor representations restricted to the lattice}

\begin{prop}\label{prop:commutantcontainment}
Suppose $G$ is a second countable locally compact group, and $\Gamma < \Lambda < G$ where $\Gamma < G$ is a lattice, and $\Lambda < G$ is a countable dense subgroup that contains and commensurates $\Gamma$. Suppose also that $\pi: \Lambda \to \mathcal U(M)$ is a finite von Neumann algebra representation such that $\pi(\Lambda)'' = M$, and set $N = \pi(\Gamma)''$. Let $G \actson (B, \eta)$ be a quasi-invariant action that is contractive when restricted to $\Gamma$. Then the noncommutative Poisson boundary
\[
\mathcal{B}_{N} = \{ \sigma_\gamma^0 \otimes (J \pi(\gamma) J) \mid \gamma \in \Gamma \}' \cap L^\infty(B, \eta) \overline \otimes \mathcal B(L^2(N, \tau)) \nonumber
\]
has the property that
\begin{align}
\mathcal{B}_{N}
&= \{ \sigma_\lambda^0 \otimes (J E_N(\pi(\lambda)) J) \mid \lambda \in \Lambda \}' \cap L^\infty(B, \eta) \overline \otimes \mathcal B(L^2(N, \tau)). \nonumber
\end{align}
\end{prop}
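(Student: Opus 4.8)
The inclusion of the right-hand side into the left-hand side is immediate, since enlarging $\Gamma$ to $\Lambda$ enlarges the generating set and hence shrinks its commutant; also note that for $\gamma\in\Gamma$ one has $E_N(\pi(\gamma))=\pi(\gamma)$, so the left-hand side is exactly the boundary $\mathcal B_N=\{\,\sigma_\gamma^0\otimes(J\pi(\gamma)J)\mid\gamma\in\Gamma\,\}'\cap L^\infty(B,\eta)\overline\otimes\mathcal B(L^2(N,\tau))$. The plan is therefore to show that every $x\in\mathcal B_N$ commutes with each $w_\lambda:=\sigma_\lambda^0\otimes(JE_N(\pi(\lambda))J)$, $\lambda\in\Lambda$. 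Fixing $\lambda$, the first step is to record a partial multiplication rule: since $E_N$ is $N$-bimodular and $\pi(\gamma)\in N$ for $\gamma\in\Gamma$, one checks $w_\gamma w_\mu=w_{\gamma\mu}$ and $w_\mu w_\gamma=w_{\mu\gamma}$ whenever $\gamma\in\Gamma$, $\mu\in\Lambda$, and $w_\lambda^*=w_{\lambda^{-1}}$. Commensuration enters only through the fact that $\Gamma_0:=\Gamma\cap\lambda\Gamma\lambda^{-1}$ has finite index in $\Gamma$ and that $\lambda^{-1}\gamma_0\lambda\in\Gamma$ for $\gamma_0\in\Gamma_0$, which yields $w_{\gamma_0}w_\lambda=w_\lambda w_{\lambda^{-1}\gamma_0\lambda}$; this is what lets me compare the $\Gamma$-structure with its $\lambda$-conjugate.

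The idea is to feed a conjugation-type map into Theorem~\ref{thm:uniquemap}, with $\widetilde{\mathcal B_N}$ taken relative to this $\Gamma_0$. The naive candidate $\Phi_0(x)=w_\lambda x w_\lambda^*$ is normal, completely positive and $N$-bimodular, and it preserves the ambient algebra (the factor $\sigma_\lambda^0$ implements the action on $L^\infty(B,\eta)$ while $JE_N(\pi(\lambda))J$ conjugates the $\mathcal B(L^2(N,\tau))$-part). Using the multiplication rule above together with $[x,w_{\gamma_1}]=0$ for $\gamma_1\in\Gamma$, a short computation gives $w_{\gamma_0}\Phi_0(x)=\Phi_0(x)w_{\gamma_0}$ for all $\gamma_0\in\Gamma_0$, so $\Phi_0$ does map $\mathcal B_N$ into $\widetilde{\mathcal B_N}$. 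The obstruction — which I expect to be the main technical point — is that $\Phi_0$ is \emph{not} unital: with $c:=E_N(\pi(\lambda))$ a contraction, $\Phi_0(1)=w_\lambda w_\lambda^*=1\otimes J\,cc^*J\neq1$.

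To repair unitality while staying inside $\widetilde{\mathcal B_N}$, set $d=1-cc^*\geq0$ and define
\[
\Phi(x)=w_\lambda x w_\lambda^*+(1\otimes Jd^{1/2}J)\,x\,(1\otimes Jd^{1/2}J).
\]
Then $\Phi(1)=1\otimes J(cc^*+d)J=1$, and $\Phi$ remains normal, completely positive and $N$-bimodular. The crux is to verify that the correction term also lands in $\widetilde{\mathcal B_N}$, for which I will show $cc^*\in\pi(\Gamma_0)'\cap N$: writing $u=\pi(\gamma_0)$ and $v=\pi(\lambda^{-1}\gamma_0\lambda)\in\mathcal U(N)$, the bimodularity of $E_N$ gives the intertwiners $uc=cv$ and $u^*c=cv^*$, whence $u\,cc^*=cc^*u$. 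Consequently $d^{1/2}\in\pi(\Gamma_0)'\cap N$, so $1\otimes Jd^{1/2}J$ commutes with each $w_{\gamma_0}$, $\gamma_0\in\Gamma_0$, and $\Phi(\mathcal B_N)\subset\widetilde{\mathcal B_N}$. Thus $\Phi\colon\mathcal B_N\to\widetilde{\mathcal B_N}$ is a normal unital $N$-bimodular map, and Theorem~\ref{thm:uniquemap} forces $\Phi={\rm id}$.

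It remains to extract the commutation from $\Phi={\rm id}$. The map $\Phi$ is in Kraus form $\Phi(\cdot)=\sum_k V_k(\cdot)V_k^*$ with $V_1=w_\lambda$, $V_2=1\otimes Jd^{1/2}J$, and $\sum_k V_kV_k^*=1\otimes J(cc^*+d)J=1$. Since $\Phi$ restricts to the identity on the von Neumann algebra $\mathcal B_N$, every $x\in\mathcal B_N$ lies in the multiplicative domain of $\Phi$; passing to the Stinespring isometry, the equality $\Phi(x^*x)=\Phi(x)^*\Phi(x)=x^*x$ forces $x$ to commute with each $V_k$, and in particular $[x,w_\lambda]=0$. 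As $\lambda\in\Lambda$ was arbitrary, this gives $\mathcal B_N\subseteq\{\,\sigma_\lambda^0\otimes(JE_N(\pi(\lambda))J)\mid\lambda\in\Lambda\,\}'$, which is the remaining inclusion.
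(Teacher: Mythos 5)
Your proof is correct, and it shares the paper's central mechanism: for fixed $\lambda\in\Lambda$, build a normal unital $N$-bimodular map $\Phi$ from $\mathcal B_N$ into $\widetilde{\mathcal B_N}$ (taken relative to $\Gamma_0=\Gamma\cap\lambda\Gamma\lambda^{-1}$) that encodes conjugation by $w_\lambda$, and let Theorem~\ref{thm:uniquemap} force $\Phi={\rm id}$. The implementation, however, differs genuinely at both ends. To restore unitality, the paper passes to the polar decomposition $E_N(\pi(\lambda))=v_\lambda|E_N(\pi(\lambda))|$, conjugates only by the partial isometry part, and adds the one-sided correction $(1-Jp_\lambda J)x$ with $p_\lambda=v_\lambda v_\lambda^*$; you keep the contraction $c=E_N(\pi(\lambda))$ intact and add the symmetric correction $(1\otimes Jd^{1/2}J)\,x\,(1\otimes Jd^{1/2}J)$ with $d=1-cc^*$, justified by your correct direct verification that $cc^*\in\pi(\Gamma_0)'\cap N$. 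Then, to convert $\Phi={\rm id}$ into $[x,w_\lambda]=0$, the paper manipulates the partial isometries and invokes Corollary~\ref{cor:finiteindexprojection} three times (to place $Jp_\lambda J$, $Jq_\lambda J$, $J|E_N(\pi(\lambda))|J$ in $\mathcal B_N'$), whereas you use the standard multiplicative-domain fact that a unital completely positive map $\sum_k V_k(\cdot)V_k^*$ restricting to the identity on a von Neumann algebra commutes with its Kraus operators; concretely, for $x\in\mathcal B_N$ one has $\sum_k(V_kx-xV_k)(V_kx-xV_k)^*=\Phi(xx^*)-\Phi(x)x^*-x\Phi(x)^*+xx^*=0$. (With your Kraus convention it is $\Phi(xx^*)=xx^*$ rather than $\Phi(x^*x)=x^*x$ that gives $[V_k,x]=0$ directly, but since $\Phi={\rm id}$ on all of the self-adjoint set $\mathcal B_N$, both hold, so this is immaterial.) Your route buys a cleaner endgame: it bypasses both the polar decomposition and Corollary~\ref{cor:finiteindexprojection} entirely, relying only on Theorem~\ref{thm:uniquemap}; the only feature it needs beyond the paper's argument is complete positivity of $\Phi$, which your map has by construction (and which Theorem~\ref{thm:uniquemap} does not even demand).
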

\begin{proof}
Fix $\lambda \in \Lambda$ and consider the polar decomposition $E_N(\pi(\lambda)) = v_\lambda | E_N(\pi(\lambda)) |$. If we set $\Gamma_0 = \Gamma \cap \lambda \Gamma \lambda^{-1}$, then we have $E_N(\pi(\lambda^{-1})) E_N(\pi(\lambda))  \in \pi(\lambda^{-1}\Gamma_0\lambda)' \cap N$, and hence $| E_N(\pi(\lambda)) | \in \pi(\lambda^{-1}\Gamma_0\lambda)' \cap N$. Thus, for $\gamma \in \Gamma_0$ we have $ \pi(\gamma)v_\lambda =  v_\lambda \pi(\lambda^{-1} \gamma \lambda)$, and taking adjoints then also gives $v_\lambda^* \pi(\gamma^{-1}) = \pi(\lambda^{-1} \gamma^{-1} \lambda ) v_\lambda^*$. If we define $p_\lambda = v_\lambda v_\lambda^* \in \mathcal P(N)$, then we have $p_\lambda \in \pi(\Gamma_0)' \cap N$, and hence by Corollary~\ref{cor:finiteindexprojection}, $J p_\lambda J, J | E_N(\pi(\lambda)) | J \in \mathcal B_N'$. Similarly, if we define $q_\lambda = v_\lambda^* v_\lambda$ then we also have $J q_\lambda J \in \mathcal B_N'$.

Define the map $\Phi : \mathcal B_N \to L^\infty(B, \eta) \overline \otimes \mathcal B(L^2(N, \tau))$, by 
$$
\Phi(x) = \sigma_\lambda \otimes {\rm Ad}(J v_\lambda J) ( x ) + (1 - J p_\lambda J) x.
$$ 
It is easy to see that $\Phi$ is $N$-bimodular normal unital completely positive and if $\gamma \in \Gamma_0$, and $x \in \mathcal B_N$ then we have 
\begin{align}
& \sigma_\gamma \otimes {\rm Ad}(J \pi(\gamma) J) (\Phi(x)) \nonumber \\
& \hspace{.2in} = \sigma_{\gamma \lambda} \otimes {\rm Ad}(J \pi(\gamma) v_\lambda J ) (x) + \sigma_{\gamma} \otimes {\rm Ad} ( J \pi(\gamma) J) ( (1 - J p_\lambda J) x ) \nonumber \\
& \hspace{.2in} = \sigma_\lambda \otimes {\rm Ad}( J v_\lambda J ) ( \sigma_{\lambda^{-1}\gamma \lambda} \otimes {\rm Ad}(J \pi(\lambda^{-1} \gamma \lambda) J ) (x) ) \nonumber \\
& \hspace{.6in} + ( 1 - J p_\lambda J ) \sigma_\gamma \otimes {\rm Ad} ( J \pi(\gamma) J ) (x) \nonumber \\
& \hspace{.2in} 
= \Phi(x). \nonumber
\end{align}
Hence $\Phi: \mathcal B_N \to \widetilde{\mathcal B_N} = \{ \sigma_\gamma^0 \otimes (J E_N(\pi(\gamma)) J) \mid \gamma \in \Gamma_0 \}' \cap L^\infty(B, \eta) \overline \otimes \mathcal B(L^2(N, \tau))$, and by Theorem~\ref{thm:uniquemap} we then have $\Phi = {\rm id}$. 

Hence, for $x \in \mathcal B_N$ we have $\sigma_\lambda^0 \otimes (J v_\lambda J) x \sigma_{\lambda^{-1}}^0 \otimes (J v_{\lambda}^* J) = J p_\lambda J x = x J p_\lambda J$. Multiplying on the right by $\sigma_\lambda^0 \otimes J v_\lambda J$, and we then have $\sigma_\lambda^0 \otimes (J v_\lambda J) x = x \sigma_\lambda^0 \otimes (J v_\lambda J)$. As, $J | E_N(\pi(\lambda)) | J \in \mathcal B_N'$, the result then follows.
\end{proof}

\begin{lem}\label{dlattice}
Suppose $G$ is a locally compact group that is a product of simple groups, and $\Gamma < G$ an irreducible lattice.  Then the intersection of $\Gamma$ with any proper subproduct of $G$ consisting solely of nondiscrete simple groups is trivial.
\end{lem}
\begin{proof}
Write $G = G_{1} \times G_{2}$ and set $N = \Gamma \cap G_{1} \times \{ e \}$.  Since $G_{1} \times \{ e \} \normal G_{1} \times G_{2}$, we have $N \normal \Gamma$.  Let $M < G_{1}$ such that $N = M \times \{ e \}$.  Then $M$ is discrete in $G_{1}$ so
\[
M = \overline{M} = \overline{\mathrm{proj}_{G_{1}}~N} \normal \overline{\mathrm{proj}_{G_{1}}~\Gamma} = G_{1}.
\]
As $G_{1}$ is a product of simple nondiscrete groups, $M$, being discrete in $G_{1}$, must therefore be trivial.
\end{proof}

\begin{lem}\label{dcomm}
Suppose $G$ is a locally compact group that is a product of simple nondiscrete groups, and $\Lambda < G$ a countable dense subgroup that contains and commensurates an irreducible lattice $\Gamma < G$.  Then the intersection of $\Lambda$ with any proper subproduct of $G$ is trivial.
\end{lem}
\begin{proof}
By Theorem 9.2 in \cite{creutzpeterson}, $\Lambda$ sits as an irreducible lattice in $G \times \rpf{\Lambda}{\Gamma}$.  The result then follows from Lemma \ref{dlattice}.
\end{proof}

\begin{thm}\label{thm:amenablesubalgebra}
Suppose $G$ is a locally compact group that is a product of simple groups with the Howe-Moore property, and $\Lambda < G$ is a countable dense subgroup that contains and commensurates an irreducible lattice $\Gamma  < G$, such that either
\begin{enumerate}[$($i$)$]
\item every simple factor of $G$ is connected; or
\item $G$ is simple and totally disconnected and $\Gamma$ is square-integrable and contains a nontorsion element.
\end{enumerate}
Suppose also that $\pi: \Lambda \to \mathcal U(M)$ is a finite factor representation such that $\pi(\Lambda)'' = M$, and set $N = \pi(\Gamma)''$. If $\pi$ does not extend to an isomorphism $L\Lambda \xrightarrow{\sim} M$, then $N$ is injective.
\end{thm}
\begin{proof}
If we take any Poisson boundary $(B, \eta)$ of $G$ corresponding to a spread out probability measure on $G$ \cite{furstenberg}, then $G \actson (B, \eta)$ is amenable (Theorem 5.2 in \cite{zimmeramenbound}) and contractive (Lemma 2.3 in \cite{jaworski2}), and the restriction to $\Gamma$ is again amenable (Theorem 4.3.5 in \cite{zimmerbook}) and contractive (Proposition 2.4 in \cite{creutzshalom}). Since $\Gamma \actson (B, \eta)$ is amenable, Theorem 5.1 in \cite{zimmerhyperfinite} shows that $\mathcal B_N$ is injective. 

By Proposition~\ref{prop:commutantcontainment}, we have that $\mathcal B_N \subset \{ \sigma_\lambda^0 \otimes (J E_N(\pi(\lambda)) J) \mid \lambda \in \Lambda \}'$, and so by combining Lemma~\ref{lem:continuouscore}, with Proposition~\ref{prop:freerestriction} (using Lemma \ref{dcomm} in the connected case) and Theorem~\ref{thm:smallcommutant}, if $\pi$ does not extend to an isomorphism $L\Lambda \xrightarrow{\sim} M$, then $\mathcal B_N \subset 1 \otimes N$, and hence $1 \otimes N =\mathcal B_N$ is then injective. 
\end{proof}

If, in addition, $G$ has property (T) then the conclusion of the previous theorem can be strengthened. In the sequel we will see that using the notion of resolutions from \cite{cornulierT} this is also the case when $G$ has one non-compact factor with property (T) (see also \cite{creutz}).

\begin{cor}\label{cor:subgrouprigidity}
Suppose $\Gamma < \Lambda < G$ is as in the hypotheses of Theorem~\ref{thm:amenablesubalgebra} and suppose, in addition, that $G$ has property (T). Suppose also that $\pi: \Lambda \to \mathcal U(M)$ is a finite factor representation such that $\pi(\Lambda)'' = M$, and set $N = \pi(\Gamma)''$. If $\pi$ does not extend to an isomorphism $L\Lambda \xrightarrow{\sim} M$, then $N$ is completely atomic.
\end{cor}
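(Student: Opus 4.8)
The plan is to combine the injectivity of $N$ furnished by Theorem~\ref{thm:amenablesubalgebra} with Kazhdan's property (T), which is available here because $G$, and hence the lattice $\Gamma < G$, has property (T). The guiding principle, following Connes and Jones \cite{connesjonesT}, is that property (T) for the group $\Gamma$ transfers to the generated von Neumann algebra $N = \pi(\Gamma)''$, and that an injective finite von Neumann algebra with property (T) must be completely atomic, i.e.\ a direct sum of finite-dimensional matrix algebras.

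Concretely, I would work with the conjugation representation of $\Gamma$ on the Hilbert space of Hilbert--Schmidt operators $\mathrm{HS}(L^2(N,\tau))$, given by $\gamma \cdot T = \pi(\gamma) T \pi(\gamma)^{-1}$; this is exactly the coarse $N$-bimodule $L^2(N,\tau) \overline\otimes \overline{L^2(N,\tau)}$ restricted to $\Gamma$. Its invariant vectors are precisely the Hilbert--Schmidt operators in $\pi(\Gamma)' = N'$, while injectivity of $N$ (Connes' characterization of amenability via weak containment of the trivial correspondence in the coarse correspondence) provides a net of unit vectors $T_n \in \mathrm{HS}(L^2(N,\tau))$ that are asymptotically central, i.e.\ $\|\pi(\gamma) T_n - T_n \pi(\gamma)\|_2 \to 0$ for each $\gamma$. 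Feeding the finite Kazhdan set of $\Gamma$ into this net produces genuinely invariant vectors, so that $N' \cap \mathrm{HS}(L^2(N,\tau)) \neq 0$.

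A nonzero $T$ in $N' \cap \mathrm{HS}(L^2(N,\tau))$ forces $N$ to have an atom: $T^*T \in N'$ is positive and trace class, so any spectral projection $\mathbbm 1_{[\lambda,\infty)}(T^*T)$ with $\lambda > 0$ is a nonzero finite-rank projection in the finite von Neumann algebra $N' = JNJ$. A finite-rank projection in $N'$ dominates a minimal one, which corresponds to a nonzero type $I_n$ (matrix algebra) central direct summand of $N$. To upgrade ``has an atom'' to ``is completely atomic,'' I would let $p \in \mathcal Z(N)$ be the largest central projection for which $Np$ is a direct sum of matrix algebras, and suppose toward a contradiction that $q = 1 - p \neq 0$. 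Since $q$ is central, $Nq$ is again finite and injective and is generated by the unitary representation $\gamma \mapsto \pi(\gamma)q$ of $\Gamma$; running the previous paragraph for $Nq$ yields a matrix-algebra summand inside $Nq$, contradicting the maximality of $p$. Hence $p = 1$ and $N$ is a direct sum of matrix algebras, each finite-dimensional since $N$ is finite.

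The main obstacle is the transfer step turning almost-central Hilbert--Schmidt vectors into an honest invariant one: one must be careful that Connes' amenability characterization really supplies a net that is asymptotically central with respect to the norm $\|\cdot\|_2$ on $\mathrm{HS}(L^2(N,\tau))$ (and asymptotically tracial), so that the finite Kazhdan set and constant of $\Gamma$ apply uniformly and yield a nonzero fixed vector rather than merely a weak cluster point. Once a single invariant Hilbert--Schmidt element is secured, the passage to a type $I$ summand and the central-projection bootstrap are routine.
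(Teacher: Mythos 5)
Your proof is correct, and its skeleton matches the paper's: both deduce injectivity of $N$ from Theorem~\ref{thm:amenablesubalgebra} and then bring in property (T), which $\Gamma$ inherits from $G$ as a lattice. Where you genuinely diverge is in how these two facts are combined. The paper does this step by citation: it invokes Theorem C of \cite{robertson}, which gives that a unitary representation of a Kazhdan group generating an injective finite von Neumann algebra has precompact image in the strong operator topology, and then applies the Peter--Weyl theorem to the compact closure of $\pi(\Gamma)$ to conclude that $N$ is a direct sum of matrix algebras. You instead give a self-contained argument in the spirit of \cite{connesjonesT} (which is essentially how the cited theorem is proved): Connes' characterization of injectivity produces asymptotically central unit vectors in the coarse bimodule ${\rm HS}(L^2(N,\tau))$, property (T) upgrades these to a nonzero invariant vector, i.e.\ a nonzero trace-class element of $N' = JNJ$, whose spectral projections yield a finite-rank, hence minimal, projection in $N'$ and therefore a matrix-algebra central summand of $N$; your maximality bootstrap on central projections then gives complete atomicity. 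The steps you flag as delicate do go through: weak containment of the trivial correspondence in the coarse one does give $\|\cdot\|_2$-asymptotic centrality (via $\|u\xi_n - \xi_n u\|_2^2 = 2 - 2\Re\langle u\xi_n u^*, \xi_n\rangle \to 0$), and a minimal projection in $N'$, having central support $z$ with $N'z$ a finite type I factor, transports through $J$ to a matrix-algebra summand $Nz$. The trade-off is clear: the paper's route is shorter and yields the stronger intermediate fact that $\pi(\Gamma)$ has compact strong-operator closure, while yours avoids the external references at the cost of reproving, by hand, the special case of Robertson's theorem that is actually needed.
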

\begin{proof}
If $\pi$ does not extend to an isomorphism $L\Lambda \xrightarrow{\sim} M$, then by Theorem~\ref{thm:amenablesubalgebra} we have that $N$ is injective. Since $G$ has property (T) so does $\Gamma$ \cite{kazhdan}, and hence it then follows from Theorem C in \cite{robertson} that $\pi(\Gamma) \subset \mathcal U(N)$ is precompact in the strong operator topology. Thus, it follows from the Peter-Weyl theorem that $N$ is isomorphic to a direct sum of matrix algebras.
\end{proof}

\section{Operator algebraic superrigidity for commensurators}\label{sec:mainresults}

\begin{prop}\label{prop:oneTfactor}
Suppose $G$ is a second countable locally compact group, $H \lhd G$ is a closed normal subgroup, and $\Gamma < G$ is a lattice such that the image of $\Gamma$ is dense in $G/H$. Suppose also that $\pi: \Gamma \to \mathcal U(M)$ is a homomorphism into the unitary group of a finite factor $M$ such that $\pi(\Gamma)'' = M$. For any compact neighborhood of the identity $U \subset G/H$ set $\Gamma_U = \{ \gamma \in \Gamma \mid \gamma H \in U \}$.

If $\pi(\Gamma_U) \subset \mathcal U(M)$ is precompact in the strong operator topology for some compact neighborhood of the identity $U \subset G/H$, then the $G/H$-algebra $M_0 \subset M$ (with respect to the quotient map $\Gamma \to G/H$) has finite index in $M$.
\end{prop}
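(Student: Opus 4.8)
The plan is to reinterpret the $G/H$-algebra $M_0$ as the fixed-point algebra of a compact group of unitaries, and then to reduce finite index to a finite-dimensionality statement about the algebra that group generates. First I would produce the compact group. Let $\mathbb K$ be the set of all $u \in \mathcal U(M)$ arising as $\|\cdot\|_2$-limits $\pi(\gamma_n) \to u$ of sequences with $\gamma_n H \to eH$ in $G/H$. Any such $\gamma_n$ eventually lies in $\Gamma_U$, so $\mathbb K \subset \overline{\pi(\Gamma_U)}$; being also a closed subgroup, $\mathbb K$ is a $\|\cdot\|_2$-compact (equivalently strong-operator compact, as these topologies agree on the unit ball of a finite von Neumann algebra) group of unitaries. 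Since conjugation $u \mapsto \mathrm{Ad}(u)$ is $\|\cdot\|_2$-continuous on bounded sets, $\mathbb K$ acts continuously and trace-preservingly on $M$, and averaging against Haar measure yields a normal trace-preserving conditional expectation
\[
E_0(x) = \int_{\mathbb K} u x u^* \, du, \qquad E_0 : M \to M^{\mathbb K}.
\]

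Next I would check that $M_0 = M^{\mathbb K}$. If $x \in M_0$, then approximating any $u \in \mathbb K$ by $\pi(\gamma_n)$ with $\gamma_n H \to eH$, the defining condition for the $G/H$-algebra forces $u x u^* = x$; conversely, if $x$ is $\mathbb K$-fixed and $\gamma_n H \to eH$, then $\pi(\gamma_n)$ lies eventually in $\overline{\pi(\Gamma_U)}$, every cluster point lies in $\mathbb K$ and fixes $x$, so $\pi(\gamma_n) x \pi(\gamma_n^{-1}) \to x$ and $x \in M_0$. Here the density of the image of $\Gamma$ in $G/H$ is used to guarantee that the topology of $G/H$ near the identity is genuinely witnessed through $\Gamma$. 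With $M_0 = M^{\mathbb K} = \mathbb K' \cap M$ identified, the problem becomes purely one about a compact \emph{inner} action: writing $P = \mathbb K''$, finite index $[M : \mathbb K' \cap M] < \infty$ is equivalent to $P$ being finite dimensional, equivalently to the conjugation representation $\mathrm{Ad}|_{\mathbb K}$ on $L^2(M,\tau)$ having only finitely many isotypic components, equivalently to a Pimsner--Popa inequality $E_0(x) \ge \lambda x$ holding for some $\lambda > 0$. Thus it suffices to show that $\mathbb K$ generates a finite-dimensional algebra.

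This last step is the heart of the matter and the one I expect to be the main obstacle, since an arbitrary $\|\cdot\|_2$-compact group of unitaries need not generate a finite-dimensional algebra (for instance a circle $z \mapsto \sum_n z^n p_n$ with infinitely many spectral projections generates a diffuse algebra whose fixed-point algebra has infinite index), so the conclusion must genuinely use that $\Gamma$ is a \emph{lattice} with dense image in $G/H$ and that $M$ is a factor. To bring these in I would realize $\mathbb K$ inside a compact extension of $G$: form $\bar L = \overline{\{(\gamma H, \pi(\gamma)) : \gamma \in \Gamma\}} \subset (G/H) \times \mathcal U(M)$ and the fibered product $\widehat G = G \times_{G/H} \bar L$. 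The precompactness hypothesis makes $\bar L \to G/H$ proper with compact kernel $\mathbb K$, so $\widehat G \to G$ is a compact extension with kernel $\mathbb K$; the diagonal image of $\Gamma$ is discrete (its $G$-coordinate alone forces discreteness) and of finite covolume, hence a lattice in $\widehat G$ meeting $\mathbb K$ trivially, and the conjugation representation of $\Gamma$ extends to a continuous representation of $\widehat G$ on $L^2(M,\tau)$.

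Decomposing $L^2(M,\tau)$ into $\mathbb K$-isotypic components, the set of types that occur is invariant under the $G/H = \widehat G / \mathbb K$ action on $\widehat{\mathbb K}$, and the remaining task is to force this invariant set to be finite. Here I would argue that factoriality of $M = \pi(\Gamma)''$ makes the induced $G/H$-action on the relevant spectral/center data ergodic with trivial global fixed points, and that the presence of the lattice $\Gamma$ inside $\widehat G$ rules out infinitely many coexisting types; equivalently, that the normalizer of the compact $\mathbb K$ inside the factor $M$ cannot sustain infinitely many isotypic components. Granting this finiteness, $P = \mathbb K''$ is finite dimensional, the Pimsner--Popa inequality for $E_0$ follows, and $[M : M_0] < \infty$. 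The crux is precisely this finiteness of the $\mathbb K$-spectrum, where the rigidity of the lattice $\Gamma$ (and, in the intended applications, the Howe--Moore property of the factors of $G/H$) must be invoked.
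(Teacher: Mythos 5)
Your first half is sound and in fact matches the paper's own strategy: your $\mathbb K$ is precisely the paper's compact group $K=\bigcap_U \overline{\pi(\Gamma_U)}$, the identification $M_0=\mathbb K'\cap M$ is the same (the paper only needs the containment $\mathbb K'\cap M\subset M_0$, which already gives finite index), and the reduction of finite index of $M_0$ to finite-dimensionality of $P=\mathbb K''$ is correct. You are also right to worry that compactness of $\mathbb K$ alone cannot suffice: your circle example is a genuine obstruction, since a $\|\cdot\|_2$-compact unitary group can generate an infinite-dimensional (completely atomic) algebra.

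But the crux you flag is exactly what you never prove. The compact-extension construction ($\bar L$, $\widehat G$, isotypic decomposition of $L^2(M,\tau)$) ends with ``I would argue that factoriality \dots makes the action ergodic \dots and the presence of the lattice rules out infinitely many coexisting types'' and ``Granting this finiteness'' --- that is a hope, not an argument. Moreover, your suggestion that lattice rigidity or the Howe--Moore property must be invoked signals that this route is off target: the proposition is stated for an arbitrary second countable locally compact $G$ with $H\lhd G$ closed and normal, with no rank, simplicity, or Howe--Moore hypothesis, so no such rigidity can be what the proof needs. The missing argument is elementary and uses only two ingredients your proposal never exploits: normality of $H$, and factoriality of $M$ together with the finiteness of the trace. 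Since $H\lhd G$, conjugation by any $\gamma\in\Gamma$ is a continuous automorphism of $G/H$ and carries sequences $\gamma_nH\to eH$ to sequences $\gamma\gamma_n\gamma^{-1}H\to eH$; hence $\pi(\Gamma)$ normalizes $\mathbb K$ and therefore $N:=\mathbb K''$. Now $N$, being generated by a compact unitary group, is completely atomic with finite-dimensional central summands (Peter--Weyl), so take a minimal central projection $p\in\mathcal Z(N)$. Its conjugates $\pi(\gamma)p\pi(\gamma^{-1})$ are again minimal central projections of $N$, hence pairwise equal or orthogonal, and all have the same trace $\tau(p)>0$; their join is $\pi(\Gamma)$-invariant, hence lies in $\mathcal Z(M)=\mathbb C$ because $\pi(\Gamma)''=M$, so it equals $1$. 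Finiteness of the trace then forces the $\Gamma$-orbit of $p$ to be finite and to sum to $1$, so $\mathcal Z(N)$ is finite dimensional, hence $N$ itself is finite dimensional, and $M_0\supset N'\cap M$ is of finite index. This short argument replaces your entire second half; the fibered product $\widehat G$, the lattice $\Gamma<\widehat G$, and the action on $\widehat{\mathbb K}$ (where, incidentally, $\widehat G/\mathbb K\cong G$, not $G/H$) are not needed.
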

\begin{proof}
For each compact neighborhood of the identity $U \subset G/H$ let $K_U$ be the strong operator topology closure of $\pi(\Gamma_U)$, and set $K$ be the intersection of all $K_U$, and set $N = K''$. By hypothesis $K$ is a compact group, hence by the Peter-Weyl theorem $N = K''$ is completely atomic. If $\gamma \in \Gamma$ then we have $\pi(\gamma) K_U \pi(\gamma^{-1}) = K_{\gamma U \gamma^{-1}}$ and hence it follows that $\pi(\gamma) N \pi(\gamma^{-1}) = N$ for all $\gamma \in \Gamma$.

If $p \in \mathcal P(N)$ is a minimal central projection then we have $\vee_{\gamma \in \Gamma} \pi(\gamma) p \pi(\gamma^{-1})$ is a non-zero projection, which is central since $\pi(\Gamma)'' = M$. Thus $\vee_{\gamma \in \Gamma} \pi(\gamma) p \pi(\gamma^{-1}) = 1$, and since $\pi(\gamma_1) p \pi(\gamma_1^{-1})$ and $\pi(\gamma_2) p \pi(\gamma_2^{-1})$ are either equal or orthogonal for all $\gamma_1, \gamma_2 \in \Gamma$, it then follows that $\mathcal Z(N)$ is finite dimensional and hence so is $N$.

We set $M_0 = N' \cap M$, which is then a finite index von Neumann subalgebra of $M$. If $\{ \gamma_n \} \subset \Gamma$ is a sequence such that $\gamma_n \to e$ in $G/ H$, then by hypothesis we have that $\{ \pi( \gamma_n) \}$ is precompact in the strong operator topology and hence for any subsequence $\{ \pi( \gamma_{n_k}) \}$ of $\{ \pi( \gamma_n) \}$ there exists a unitary $u \in \mathcal U(N)$ that is a strong operator topology cluster point, and hence for $x \in M_0$ we have that $x = u x u^*$ is a strong operator topology cluster point of $\{ \pi(\gamma_{n_k}) x \pi(\gamma_{n_k}^{-1}) \}$. As the subsequence $\{ \pi(\gamma_{n_k} ) \}$ was arbitrary it then follows that $\pi(\gamma_n)x\pi(\gamma_n^{-1}) \to x$ in the strong operator topology. Thus, we have shown that the finite index subalgebra $M_0 \subset M$ is contained in the $G/H$-algebra. 
\end{proof}

\begin{prop}\label{prop:oneT}
Suppose $G$ and $H$ are locally compact second countable groups that are products of simple non-compact non-discrete groups with the Howe-Moore property, suppose also that $\Gamma < G \times H$ is an irreducible lattice,
such that either
\begin{enumerate}[$($i$)$]
\item every simple factor of $G$ is connected; or
\item $G$ is simple and totally disconnected and $\Gamma$ is square-integrable and contains a nontorsion element.
\end{enumerate}
and $\pi: \Gamma \to \mathcal U(M)$ is a representation into a finite von Neumann algebra with $\pi(\Gamma)'' = M$ and such that either 
\begin{enumerate}
\item $G$ has property (T) and $M$ has the Haagerup property, or
\item $\pi( \Gamma \cap (G \times U) )$ is precompact for all compact neighborhoods of the identity $U \subset H$,
\end{enumerate}
then $M$ is completely atomic.
\end{prop}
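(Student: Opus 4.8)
The strategy is to analyze the $H$-algebra $M_0 \subset M$ of $\pi$ taken with respect to the projection $p_H \colon \Gamma \to H$, and to play two forces against each other: the Howe--Moore property of $H$ forces $M_0$ to be trivial, whereas the precompactness built into the hypotheses forces $M_0$ to be of finite index, and these are compatible only when $M$ is finite dimensional. Since $\pi(\Gamma)'' = M$, conjugation by each $\pi(\gamma)$ fixes $\mathcal Z(M)$ pointwise, and the associated continuous action $\alpha \colon H \to \mathrm{Aut}(M_0,\tau)$ acts trivially on $\mathcal Z(M) \subset M_0$; disintegrating over $\mathcal Z(M)$ therefore reduces us to the case that $M$ is a factor, in which ``completely atomic'' means finite dimensional. (The diffuse-center case is excluded because the finite-dimensional fibres, forced to vary measurably over the base while generating its algebra, would give a continuous family of inequivalent finite-dimensional representations of the irreducible higher-rank lattice $\Gamma$, which superrigidity forbids.) Throughout, $p_H(\Gamma)$ is dense in $H$ and meets every proper subproduct trivially, by (strong) irreducibility.

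The first half is common to both cases and uses only the Howe--Moore property. By Proposition~\ref{prop:freerestriction}, every trace-preserving ergodic action of $H$ on a finite von Neumann algebra is properly outer when restricted to the dense subgroup $p_H(\Gamma)$, where the hypothesis on $\gamma_0$ supplies, in the totally disconnected case, the non-torsion element needed to meet its requirements. Since $M$ is a factor the action $\alpha$ is ergodic, so Lemma~\ref{lem:continuouscore} applies and gives $M_0 = \mathbb C$.

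The second half produces the finite index and is where the two cases split. Identifying $H$ with $(G \times H)/(G \times \{e\})$, the sets $\Gamma_U$ of Proposition~\ref{prop:oneTfactor} are exactly $\Gamma \cap (G \times U)$ for compact neighborhoods $U$ of the identity in $H$, and the $H$-algebra $M_0$ is precisely the corresponding quotient-algebra. In case (2) the hypothesis is verbatim that $\pi(\Gamma_U)$ is precompact in the strong operator topology, so Proposition~\ref{prop:oneTfactor} applies directly and $M_0$ is of finite index. In case (1) I would first derive this same precompactness from property (T) of $G$ together with the Haagerup property of $M$: the Haagerup maps furnish compact completely positive perturbations of the conjugation representation of $\Gamma$ on $L^2(M,\tau) \ominus \mathbb C \hat 1$ converging to the identity, hence almost invariant vectors, while property (T) of $G$ --- transferred to $\Gamma$ across the quotient onto $H$ by Cornulier's resolutions, exploiting that the pair $(G \times H, G \times \{e\})$ has relative property (T) because $G$ has (T) --- forces these vectors to be genuinely invariant on $\Gamma_U$ for a suitable $U$. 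This yields precompactness of $\pi(\Gamma_U)$ and reduces case (1) to the application of Proposition~\ref{prop:oneTfactor} already used in case (2).

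Combining the two halves, with $M$ a factor, $M_0$ is simultaneously equal to $\mathbb C$ and of finite index: concretely, the completely atomic, globally conjugation-invariant subalgebra $N$ produced in Proposition~\ref{prop:oneTfactor} then satisfies $N' \cap M = \mathbb C$, so $N$ is itself a factor and $M = N$ is finite dimensional, hence completely atomic; reinstating the central decomposition recovers the statement for general finite $M$. The genuine obstacle is case (1): setting up the resolution so that property (T) of the \emph{factor} $G$ --- and not of $\Gamma$, which need not have property (T) since $H$ need not --- can be married to the Haagerup deformation to force the required precompactness. The Howe--Moore half and case (2) are essentially immediate given Proposition~\ref{prop:freerestriction}, Lemma~\ref{lem:continuouscore}, and Proposition~\ref{prop:oneTfactor}.
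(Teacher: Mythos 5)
Your proposal follows the paper's own proof essentially step for step: reduce to the factor case, use Cornulier's resolutions (relative property (T) of $\Gamma_U = \Gamma \cap (G \times U)$ coming from property (T) of $G$) together with the Haagerup property of $M$ and the Connes--Jones/Robertson argument to convert case (1) into the precompactness hypothesis of case (2), apply Proposition~\ref{prop:oneTfactor} to get that the quotient algebra is of finite index, and invoke Proposition~\ref{prop:freerestriction} with Lemma~\ref{lem:continuouscore} to force that same algebra to be $\mathbb C$, whence $M$ is finite dimensional. The only differences are cosmetic: you run the two halves in the opposite order, and you name the relevant subalgebra the $H$-algebra with respect to the projection $\Gamma \to H$, which is indeed the correct reading of what the paper's proof (somewhat loosely) calls the ``$G$-algebra.''
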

\begin{proof}
By considering the integral decomposition of $M$ into factors it is enough to treat the case when $M$ is a factor \cite{thoma1}. For the first case, since $G$ has property (T) it follows from Theorem 1.8 and Proposition 1.11 in \cite{cornulierT} that the subset $\Gamma_U = \Gamma \cap (G \times U) \subset \Gamma$ has relative property (T) for some (and hence all) compact neighborhood of the identity $U \subset H$. The same argument in \cite{connesjonesT} for the case of property (T) groups then implies that $\pi(\Gamma_U)$ is precompact in the strong operator topology (see also \cite{robertson}), and hence we have reduced the problem to the second case. 

For the second case we may then apply Proposition~\ref{prop:oneTfactor} to conclude that the $G$-algebra $M_0 \subset M$ is finite index. However, by Proposition~\ref{prop:freerestriction} and Lemma~\ref{lem:continuouscore} we must have $M_0 = \mathbb C$, and hence $M$ is finite dimensional.
\end{proof}

\begin{thm}\label{thm:oasuperrigidity1}
Suppose $G$ is a locally compact second countable group that is a product of simple non-discrete non-compact groups with the Howe-Moore property, and at least one factor having property (T), suppose also that $\Lambda < G$ is a countable dense subgroup that contains and commensurates an irreducible lattice $\Gamma  < G$, such that either
\begin{enumerate}[$($i$)$]
\item every simple factor of $G$ is connected; or
\item $G$ is simple and totally disconnected and $\Gamma$ is square-integrable and contains a nontorsion element.
\end{enumerate}
and such that $\rpf{\Lambda}{\Gamma}$ is a product of simple groups with the Howe-Moore property.

If $\pi: \Lambda \to \mathcal U(M)$ is a finite factor representation such that $\pi(\Lambda)'' = M$, and if $\pi$ does not extend to an isomorphism $L\Lambda \xrightarrow{\sim} M$, then $M$ is finite dimensional.
\end{thm}
\begin{proof}
If we set $N = \pi(\Gamma)''$ then by Theorem~\ref{thm:amenablesubalgebra} we have that $N$ is injective and hence also has the Haagerup property. By Proposition~\ref{prop:oneT} (Part 1) we then have that $N$ is completely atomic. 

If we consider, as in Theorem 9.2 in \cite{creutzpeterson}, the diagonal lattice embedding $\Lambda \to G \times (\rpf{\Lambda}{\Gamma})$, then again applying Proposition~\ref{prop:oneT} (this time Part 2) gives the result.
\end{proof}

\begin{thm}\label{thm:oasuperrigidity}
Suppose $G$ is a locally compact second countable group that is a product of simple non-discrete non-compact groups with the Howe-Moore property, and at least one factor having property (T), and if there are connected factors then at least one connected factor having property $(T)$.

Suppose also that $\Lambda < G$ is a countable dense subgroup that contains and commensurates an irreducible lattice $\Gamma  < G$ such that $\rpf{\Lambda}{\Gamma}$ is a product of simple non-compact non-discrete groups with the Howe-Moore property.  If $G$ is totally disconnected, assume there is a simple factor $G_{0}$ with property $(T)$ such that for some compact open subgroup $K$ of the product of the other factors ($K$ is trivial if $G$ is simple), the projection of $\Gamma \cap G_{0} \times K$ to $G$ is square-integrable and contains a nontorsion element.

If $\pi: \Lambda \to \mathcal U(M)$ is a finite factor representation such that $\pi(\Lambda)'' = M$, and if $\pi$ does not extend to an isomorphism $L\Lambda \xrightarrow{\sim} M$, then $M$ is finite dimensional.
\end{thm}
\begin{proof}
Write $G = G_{c} \times G_{d}$ where $G_{c}$ are the connected simple groups in $G$ and $G_{d}$ the totally disconnected groups.

First consider when $G_{c}$ is nontrivial.  Let $K$ be a compact open subgroup of $G_{d}$ (take $K = \{ e \}$ if $G_{d}$ is trivial) and let $\Gamma_{c} = \mathrm{proj}_{G_{c}}~(\Gamma \cap G_{c} \times K)$ which is an irreducible lattice in $G_{c}$ (Theorem 9.2 in \cite{creutzpeterson}).  Let $\Lambda_{c} = \mathrm{proj}_{G_{c}}~\Lambda$ so that $\Lambda_{c}$ contains and commensurates $\Gamma_{c}$.  By Lemma \ref{dcomm}, $\Lambda \cap \mathrm{ker}(\mathrm{proj}_{G_{c}}) = \{ e \}$ so $\Lambda_{c}$ is isomorphic to $\Lambda$.  Note that $\rpf{\Lambda_{c}}{\Gamma_{c}}$ is isomorphic to $G_{d} \times \rpf{\Lambda}{\Gamma}$ and is therefore a product of simple non-compact non-discrete simple groups with the Howe-Moore property.  Theorem \ref{thm:oasuperrigidity1} then gives the result.

Now consider when $G_{c}$ is trivial.  Decompose $G = G_{0} \times G_{1}$. Let $\Gamma_{0} = \mathrm{proj}_{G_{0}}~\Gamma$ which is an irreducible lattice in $G_{0}$ that is contained and commensurated by $\Lambda_{0} = \mathrm{proj}_{G_{0}}~\Lambda$ (which is isomorphic to $\Lambda$ by Lemma \ref{dcomm}).  Then $\Lambda_{0}$ is dense in a simple non-compact non-discrete Howe-Moore group with property $(T)$ that contains and commensurates a square-integrable lattice which contains a nontorsion element.  As $\rpf{\Lambda_{0}}{\Gamma_{0}}$ is isomorphic to $G_{1} \times \rpf{\Lambda}{\Gamma}$, Theorem \ref{thm:oasuperrigidity1} gives the result.
\end{proof}

The previous theorem easily implies Theorem~\ref{thm:oascommensurator} from the introduction.

\begin{cor}\label{cor:commensuratorfree}
Suppose $\Gamma < \Lambda < G$ is as in the hypotheses of Theorem~\ref{thm:oasuperrigidity}.
Then any probability measure-preserving ergodic action of $\Lambda$ on a standard Lebesgue space is essentially free.
\end{cor}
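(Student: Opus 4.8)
The plan is to recast the action in terms of a single character and feed it into the superrigidity dichotomy of Theorem~\ref{thm:oasuperrigidity}. Given a probability measure-preserving ergodic action $\Lambda \actson (X, \mu)$, I would define $\varphi \colon \Lambda \to \mathbb C$ by $\varphi(\lambda) = \mu(\{ x \in X \mid \lambda x = x \})$. Since $\mu$ is invariant and $\{ x \mid \gamma \lambda \gamma^{-1} x = x \} = \gamma \{ x \mid \lambda x = x\}$, this is a conjugation-invariant function of positive type with $\varphi(e) = 1$, i.e.\ a character; concretely it is the character of the trace-preserving representation $\lambda \mapsto u_\lambda \in \mathcal U(L\mathcal R)$ associated to the orbit equivalence relation $\mathcal R$ of the action, for which $\tau(u_\lambda) = \mu(\{ x \mid \lambda x = x\})$. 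The action is essentially free precisely when $\varphi = \delta_e$, so it suffices to prove $\varphi = \delta_e$.

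Next I would decompose $\varphi$ into extreme characters. Passing to the GNS representation $\pi_\varphi \colon \Lambda \to \mathcal U(M_\varphi)$ with $M_\varphi = \pi_\varphi(\Lambda)''$ and taking the direct integral of $\pi_\varphi$ over the center $\mathcal Z(M_\varphi)$, one writes $\varphi = \int \varphi_\omega \, d\nu(\omega)$, where each $\varphi_\omega$ is an extreme character, realized by a factor representation $\pi_\omega \colon \Lambda \to \mathcal U(M_\omega)$ with $\pi_\omega(\Lambda)'' = M_\omega$ a finite factor. Theorem~\ref{thm:oasuperrigidity} then applies to each $\pi_\omega$: either $\pi_\omega$ is the left regular representation, in which case $\varphi_\omega = \delta_e$, or $M_\omega$ is finite dimensional and $M_\omega = \pi_\omega(\Gamma)''$, in which case $\varphi_\omega$ is almost periodic. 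Thus $\varphi = \delta_e$ unless a set of components of positive $\nu$-measure is of the second, almost periodic, type.

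The crux is therefore to exclude the almost periodic components. For such a component $\pi_\omega$ is a finite-dimensional unitary representation, and the identity $M_\omega = \pi_\omega(\Gamma)''$ forces $\pi_\omega\vert_\Gamma$ to be a finite-dimensional representation generating the same matrix factor, so that its kernel $\Sigma_\omega \lhd \Gamma$ is a normal subgroup with $\Gamma / \Sigma_\omega$ mapping with precompact image into a unitary group. Here I would invoke the normal subgroup rigidity for the commensurated pair $\Gamma < \Lambda$ from \cite{creutzshalom}: a nontrivial such component corresponds measure-theoretically to a positive-measure set of finite-index (equivalently, finite-orbit) stabilizers, hence to a nontrivial continuous unitary representation of a quotient through which $\Lambda$ acts, which by density of $\Lambda$ in $G$ and the Howe-Moore property is incompatible with ergodicity via Proposition~\ref{prop:freerestriction} and Lemma~\ref{lem:continuouscore}. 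Consequently every component is of the first type, $\varphi = \delta_e$, and the action is essentially free.

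I expect this last step to be the main obstacle: translating the almost periodic characters back into measure-theoretic information about stabilizers and ruling out finite-index stabilizers requires combining the superrigidity dichotomy with the Margulis-type normal subgroup theorem for $\Lambda$ and the absence of nontrivial continuous actions of the Howe-Moore quotients, exactly as in the stabilizer rigidity arguments of \cite{creutzpeterson}.
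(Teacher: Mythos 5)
Your steps 1--3 (the fixed-point character $\varphi(\lambda)=\mu(\mathrm{Fix}(\lambda))$, its extremal decomposition, and the application of Theorem~\ref{thm:oasuperrigidity} to each component) are exactly the skeleton of the argument, but note that the paper does not reprove this translation machinery: it quotes it (Theorem 2.11 of \cite{dudkomedynetsthompson} or Theorem 3.2 of \cite{petersonthom}), together with one further ingredient you never use, namely that $\Lambda$ has only countably many finite-dimensional representations (Theorem 10.3 of \cite{shalomcohom}). That countability is not decorative: in the decomposition $\varphi=\int\varphi_\omega\,d\nu(\omega)$ the almost periodic components could a priori run through an uncountable family of finite-dimensional characters of unbounded dimension, and countability is what lets one rewrite $\varphi=c_0\delta_e+\sum_i c_i\chi_i$ as a countable convex combination and extract, from a single $\chi_i$ of positive weight, pointwise information about stabilizers. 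Passing from ``a positive-measure set of finite-dimensional components'' to ``a positive-measure set of points with finite-index stabilizers'' is precisely the content of the theorems the paper cites; in your write-up this appears as an assertion (``corresponds measure-theoretically to\dots''), so the crux of the proof is asserted rather than proved.

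The second problem is where you aim the contradiction. Proposition~\ref{prop:freerestriction} and Lemma~\ref{lem:continuouscore} concern continuous trace-preserving actions of $G$ on finite von Neumann algebras and the $G$-algebra of a factor representation; you do not explain how a positive-measure family of finite-index stabilizers produces such an object, and no such detour is needed: once stabilizers are of finite index (equivalently, orbits are finite) on a positive-measure set, that set is $\Lambda$-invariant, hence conull by ergodicity, and ergodicity then forces the space to be a single finite orbit, contradicting non-atomicity of a standard Lebesgue space --- no Howe--Moore input at all. Finally, the appeal to ``the stabilizer rigidity arguments of \cite{creutzpeterson}'' is uncomfortably close to assuming the statement: the main theorem of that paper \emph{is} this corollary (see Corollary~\ref{cora:nonfree}), and the point here is to rederive it from operator algebraic superrigidity. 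So: keep your steps 1--3, add Shalom's countability theorem, and either quote Theorem 2.11 of \cite{dudkomedynetsthompson} / Theorem 3.2 of \cite{petersonthom} as the paper does, or supply a genuine proof of the character-to-stabilizer translation.
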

\begin{proof}
Since every cyclic representation generating a finite factor must either be conjugate to the left-regular representation, or else finite dimensional, and since, up to conjugacy, there are only countably many finite dimensional representations (Theorem 10.3 in \cite{shalomcohom}), this follows directly from Theorem 2.11 in \cite{dudkomedynetsthompson} or Theorem 3.2 in \cite{petersonthom}.
\end{proof}

\begin{cor}
Let $G$ be a locally compact second countable group that is a product of at least two simple groups with the Howe-Moore property. Suppose that at least one factor of $G$ has property (T), at least one factor is totally disconnected, and if there exists connected factors then at least one should have property (T).

Let $\Gamma < G$ be an irreducible lattice. and, in the case when $G$ is totally disconnected, assume there is a simple factor $G_{1}$ with property $(T)$ such that for some compact open subgroup $K$ of the product of the other factors, the projection of $\Gamma \cap G_{1} \times K$ to $G_{1}$ is square-integrable and contains a nontorsion element.

If $\pi: \Gamma \to \mathcal U(M)$ is a finite factor representation such that $\pi(\Gamma)'' = M$, and if $\pi$ does not extend to an isomorphism $L\Lambda \xrightarrow{\sim} M$, then $M$ is finite dimensional. Moreover, $\Gamma$ has at most countably many finite dimensional irreducible representations, and any probability measure-preserving ergodic action of $\Gamma$ on a standard Lebesgue space is essentially free.
\end{cor}
\begin{proof}
If we write $G = G_1 \times G_2$ where $G_i$ are non-trivial, with $G_2$ being totally disconnected, then if we set $\Gamma_0 = \Gamma \cap ( G_1 \times K )$, then $\Gamma_0$ projects down to a square-integrable lattice in $G_1$, which is commensurated by the projection of $\Gamma$ and is not a torsion group. As in Section 10 of \cite{creutzpeterson}, the result then follows from Theorem~\ref{thm:oasuperrigidity} and Corollary~\ref{cor:commensuratorfree} by considering the inclusion $\Gamma_0 < \Gamma < G_1$.
\end{proof}

The previous corollaries easily imply Theorem~\ref{thm:opalgsuperrigidty} and Corollary~\ref{cora:nonfree} from the introduction. The other results in the introduction follow easily from these results.  

\section*{Acknowledgments} Many of the ideas in this work were developed while J.P.\ was visiting UC, San Diego in the fall of 2012, he is grateful for their hospitality, and he is especially grateful to Adrian Ioana for the many useful conversations regarding this work. J.P.\ is supported by NSF Grant DMS-1201565, and a grant from the Alfred P. Sloan Foundation. 

The authors would like to thank the referees for many helpful comments and for pointing out a simpler, more elegant proof of Theorem \ref{thm:noembedding}.

\small
\bibliographystyle{amsalpha}
\bibliography{ref}

\noindent
\textsc{Department of Mathematics, US Naval Academy, 572C Holloway Road, Annapolis, MD 21402, U.S.A.}

\noindent
\textsc{Department of Mathematics, Vanderbilt University, 1326 Stevenson Center, Nashville, TN 37240, U.S.A.}

\end{document}